\newtheorem{thm}{Theorem}[section]
\newtheorem{lem}[thm]{Lemma}
\newtheorem{defi}[thm]{Definition}
\newtheorem{prop}[thm]{Proposition}
\theoremstyle{remark}
\newcommand{\mb}[1]{\mathbf{#1}} 
\newcommand{\bs}[1]{\boldsymbol{#1}}
\newcommand{\Real}{\mathbb{R}}  
\newcommand{\Div}{\mathrm{Div}\,}
\newcommand{\tr}{\mathrm{tr}}
\newcommand{\Eps}{\mathcal{E}} 
\newcommand{\dx}{\ensuremath{\, \mathrm{d}x}}
\newcommand{\ds}{\ensuremath{\, \mathrm{d}s}}
\newcommand{\Hdiv}{\mathbb{H}_{\mathrm{div}}^{p'}(\Omega)}
\newcommand{\Lskew}{\mathbb{L}_{\mathrm{skew}}^{p}(\Omega)}
\DeclarePairedDelimiter\norm{\lVert}{\rVert}
\DeclarePairedDelimiter\abs{\lvert}{\rvert}
\DeclarePairedDelimiter\angles{\langle}{\rangle}
\newcommand{\Norm}{\norm{\cdot}}
\definecolor{lightblue}{rgb}{0.22,0.45,0.70}
\definecolor{lightgreen}{rgb}{0.22,0.55,0.20}
\newcommand{\cblue}[1]{\textcolor{black}{#1}}
\begin{document}

\nocite{*}
\title{\textbf{A Dual-Mixed Approximation for a Huber Regularization of Generalized $p$-Stokes Viscoplastic Flow Problems.}}



\author{
Sergio~Gonz\'alez-Andrade \\
                \footnotesize Research Center on Mathematical Modeling (MODEMAT) and \\\footnotesize Departamento de Matem\'atica -  Escuela Polit\'ecnica Nacional\\\footnotesize Quito 170413, Ecuador.\\\footnotesize\tt sergio.gonzalez@epn.edu.ec 
        \and
 Paul E. M\'endez \\
                \footnotesize Research Center on Mathematical Modeling (MODEMAT) 
                -  Escuela Polit\'ecnica Nacional\\\footnotesize Quito 170413, Ecuador.\\\footnotesize\tt paul.mendez01@epn.edu.ec
}

\date{\today}
\maketitle

\begin{abstract}
In this paper, we propose a dual-mixed formulation for stationary viscoplastic flows with yield, such as the Bingham or the Herschel-Bulkley flow. The approach is based on a Huber regularization of the viscosity term and a two-fold saddle point nonlinear operator equation for the resulting weak formulation. We provide the uniqueness of solutions for the continuous formulation and propose a discrete scheme based on Arnold-Falk-Winther finite elements. The discretization scheme yields a system of slantly differentiable nonlinear equations, for which a semismooth Newton algorithm is proposed and implemented. Local superlinear convergence of the method is also proved. Finally, we perform several numerical experiments in two and three dimensions to investigate the behavior and efficiency of the method.

  \vskip .2in

\noindent {\bf Keywords: } Viscoplastic fluids, Dual-mixed methods, Twofold saddle point, Semismooth Newton methods.
\vspace{0.2cm}\\
\noindent {\bf AMS Subject Classification: } 76A05, 49M29, 47A52, 76M10.\\
\end{abstract}

	


\section{Introduction}
\subsection{Scope} 
This paper is devoted to the numerical solution of stationary flows of non-Newtonian fluids where the material exhibits yielded and unyielded regions, such as the Bingham and the Herschel-Bulkley flow. The Navier-Stokes equations model the movement of isothermal fluids described by the mass and momentum conservation equations. These models require a constitutive equation to define the value of the fluid stresses as a function of the flow dynamics. This constitutive equation is associated with the rheological model. In the case of the so called viscoplastic fluids, the main rheological property is the existence of a yield limit or plasticity threshold. This implies that in regions where the stress is below this threshold, the material behaves as a rigid solid. On the other hand, the material behaves as a (complex) fluid in regions where the stress overpasses the yield limit. The model that started the study of viscoplastic materials was Bingham's plastic model.  However, since several different materials exhibit this particular rheological behaviour, a generalization was proposed by W. Herschel and R. Bulkley in \cite{herschel1926}.

The nowadays classic Herschel-Bulkley and Carreau models can be understood as power-law type models with plasticity. This perspective implies the analysis of a model involving a $p$-Laplacian differential operator. These models have been used to simulate a wide range of materials from polymer solutions or blood, which are shear-thinning fluids, to sand soaked with water or thick suspensions of clays, which exhibit a shear-thickening behavior. The versatility of these models comes from the fact that the $p$-Laplacian structure allows to consider explicitly the parameter known as flow index in its formulation. This constant measures the degree to which the fluid is shear-thinning, when $1 < p < 2$ or shear-thickening, if $p > 2$.  Further, if $p=2$, the Herschel-Bulkley model gives the Bingham model, which is a benchmark material. A more gradual change of the material structure, along with the deformation rate, can be achieved with the Papanastasiou’s modification by introducing a sigmoidal term \cite{Kim}. Another two-parameter model, which implicitly shows a shear-thinning/thickening property is the Casson model. Compared with the Herschel–Bulkley model, the viscosity of the Casson model converges to a nonzero asymptotic value and the total plastic viscosity depends of the yield stress \cite{Glowinski2011}.

In this paper, the steady viscoplastic flow with yield is studied by employing a dual-mixed formulation along with a Huber based local regularization. Concerning the numerical approximation, we propose a discrete scheme based on Arnold-Falk-Winther finite elements. The discretization scheme yields a system of Newton differentiable nonlinear equations, for which a semismooth Newton algorithm is used.

Although there are several contributions focused on the numerical and theoretical analysis of viscoplastic materials, to the best of our knowledge, there is scarce work combining the numerical analysis of a mixed finite element approach with computational results in both 2D and 3D flows. This is precisely the gap that we are looking to fulfill with the present work.  The Huber regularization has proved to be an efficient approach for the numerical solution of viscoplastic models, while the novel technique of the dual-dual formulation is an efficient way to handle generalized Stokes and $p$-Stokes problems, such as the Herschel-Bulkley, the Carreau and the Casson models. Further, the semismooth Newton method combined with the Huber approach represents an efficient way to obtain reliable results with superlinear convergence behavior.  Finally, the versatility of the combined approach proposed in this paper allows us to obtain reliable and accurate simulations in both 2D and 3D flows.

\subsection{Related work} 
In \cite{Kim} the authors present and analyze a flow rate based formulation for laminar flow through a slit between parallel plates.  The flow
rate and the velocity are expressed in integral forms and the method is applied for the Bingham, Herschel–Bulkley, Herschel–Bulkley with slip, Bingham–Papanastasiou and Windhab models.

A finite element approximation of a non-Newtonian flow, where the viscosity obeys a general law including the Carreau or power law is proposed in \cite{Barret}. The authors analyze the $\mathbb{P}1$ flux – $\mathbb{P}0$ scheme of Fortin and derive energy type error bounds for the velocity and pressure. Baranger and Najib \cite{Baranger} proved abstract error estimates for the approximation of the velocity and the pressure by a mixed FEM of quasi-Newtonian flows whose viscosity obeys the power-law or the Carreau law. These estimates can be applied to most finite elements used for the solution of Stokes's problem.

The so-called \emph{dual-dual} variational formulations were first suggested  in \cite{gatica1996,gatica1997}. The method is based on the introduction of the gradient (in potential theory and heat conduction) or the strain tensor (in elasticity) as an additional unknown, which yields two-fold saddle point operator equations as the corresponding weak formulations. The extension of the method using stable mixed finite elements from linear elasticity to a nonlinear boundary value problem in plane hyperelasticity, was introduced in \cite{gatica2021}. In that work, the authors also provide an \textit{a-posteriori} error analysis for the two-fold saddle point variational formulations.

In \cite{ervin2008}, the formulation described in \cite{gatica2004} is recast for the numerical approximation of a nonlinear generalized Stokes problem. In doing so, the authors extend the analysis of Gatica et al. \cite{gatica2004}, to more general Sobolev spaces. Between the advantages of this approach, the authors point out: more flexibility in choosing the approximating finite element space for $\mb{u}$, Dirichlet boundary conditions for $\mb{u}$ become natural boundary conditions and are easily incorporated into the variational formulations, and the method avoids the assumption of expressing $\nabla\mb{u}$ as a function of $\bs{\sigma}$. All of this at the cost of introducing additional unknowns.   Note that even if we share many analytical results with Ervin et al. \cite{ervin2008} work, our model considers an explicit dependency of the viscosity on the fluid deformation tensor, and the presence of yielded and unyielded regions in the material. This in turn implies an extra layer of complexity to the numerical scheme, including the presence of a non-differentiable term. We introduce a regularization and a Newton semismooth \cblue{approach} to deal with such complexities.

\cblue{In the same line, a mixed finite element analysis of a three field formulation (velocity, pressure and non-linear stress tensor), of the non-linear Stokes problem was presented in \cite{Manouzi2001}. In that work, authors also derive stable and optimal error estimates. A closer work by Farhloul and Zine \cite{Farhloul2017}, analyzed a dual mixed finite element method for a quasi-Newtonian flow obeying the Carreau or power law (however, without yield). Well-posedness and stability of the discrete formulation is presented, together with  optimal error estimates. The mixed-hybrid finite element method has also been used to study similar flow problems. For a review of these formulations including mixed finite element methods for quasi-Newtonian flows obeying the power law, we refer the reader to the work of Farhloul and Fortin \cite{Farhloul2002}.}

Other contributions in the context of viscoplastic fluids include the models devoted to the study of Bingham fluids. Existence and regularity results for the d-dimensional Bingham fluid flow problem with Dirichlet boundary conditions are presented in \cite{Duvaut1976, Fuchs1998}. Dean \cblue{et al.}  reviewed various results and methods concerning the numerical simulation of Bingham visco-plastic flow in \cite{Deanetal}. In \cite{Apos}, the authors propose a mixed formulation of the Bingham fluid flow problem using a Bercovier–Engelman regularization but similar to our work, they also introduce an auxiliary symmetric tensor to enhance the numerical properties of the regularized term. Furthermore, it is analyzed and demonstrated numerically that the formulation with the auxiliary tensor leads to much better convergence rates than the original formulation. An augmented Lagrangian method together with an incompressible finite element approximation for the flow in a driven cavity is studied in \cite{Roquet1996}. In \cite{JC2010, JC2012} the authors present a scheme based on semismooth Newton methods together with a finite element method ((cross-grid $\mathbb{P}1)–\mathbb{Q}0$ elements) for the numerical simulation of two-dimensional stationary Bingham fluid flow. Well-posedness and convergence of the regularized problems to the original multiplier system are also verified. Regarding the numerical simulation of viscoplastic fluids, including Herschel-Bulkley fluids, we can cite \cite{Timm}, where the authors propose accelerated first order optimization algorithms for a dual formulation of the studied models.

\subsection{Outline of the paper} 

We have organized the contents of this paper in the following manner. Section~\ref{sec:model} describes the general governing equations, the constitutive relations, and the Huber regularization. Section~\ref{sec:dual-dual} outlines the dual-dual mixed formulation of the system. We also introduce the weak formulation and present the well-posedness analysis of the resulting problem in detail. In Section~\ref{sec:numerical} we introduce the Galerkin discretization and define the fully discrete method, considering a semismooth Newton (SSN) linearization. Moreover, we discuss the well-possedness of the discrete formulation and the superlinear convergence of the SSN method. Section~\ref{sec:results} is devoted to the computational results, including two-dimensional and three-dimensional examples under different conditions. We close the paper with some remarks and discussions given in Section~\ref{sec:concl}.  

\section[Problem Statement and Regularization]{Problem Statement and Regularization} \label{sec:model}

We start by introducing some notation. We denote by $L^p(\Omega)$ and  $W^{r,p}(\Omega)$ the usual Lebesgue and Sobolev spaces  with respective norms $\smash{\Norm_{L^p(\Omega)}}$ and $\smash{\Norm_{W^{r,p}(\Omega)}}$. If $p=2$ we write $H^r(\Omega)$ in place of $\smash{W^{r,p}(\Omega)}$. By~$\mb{L}$ and~$\mathbb{L}$, we denote the corresponding vectorial and tensorial counterparts of the scalar functional space~$L$. Moreover, for any vector field $\mb{v} = (v_i)_{i = 1 , d}$ we set the gradient, symmetric part of the gradient and divergence, as
\begin{equation*}
	\nabla \mb{v} := \left(\frac{\partial v_i}{\partial x_j} \right)_{i,j=1,d}, \, \Eps \mb{v} :=\frac{1}{2} \left(\nabla \mb{v} + (\nabla \mb{v})^T\right)\,\mbox{  and  }\, \nabla \cdot \mb{v} := \sum_{j=1}^d \frac{\partial v_j}{\cblue{\partial} x_j},
\end{equation*}
respectively. In addition, for any tensor field $\mb{T} = (\mb{T}_{ij})_{i,j = 1 , d}$, we let $\Div \mb{T}$ be the divergence operator acting along the rows of $\mb{T}$. The Frobenius scalar product in $\mathbb{R}^{d\times d}$ and its associated norm are defined by
\[
(\mb{A} \,:\, \mb{B}) \coloneqq \tr(AB^T)\, \mbox{  and  }\, |\mb{A}| \coloneqq \sqrt{(\mb{A}\,:\,\mb{A})},
\]
respectively. Let $\Omega$ be an open and bounded domain in $\mathbb{R}^{d}$, $d=2,3$ with Lipschitz boundary $\partial \Omega$. This work is concerned with the numerical resolution of the following system of equations
\begin{equation}\label{Problem}\tag{$\mathcal{P}$}
\begin{array}{ccl}
\,\cblue{-\Div(\nu\left(\abs{\Eps\mathbf{u}}\right)\Eps\mb{u})-\Div \bs{\sigma}_s}+ \nabla \phi =\mathbf{f},&\mbox{in $\Omega$}\vspace{0.2cm}\\\nabla\cdot \mathbf{u} =0,&\mbox{in $\Omega$}\vspace{0.2cm}\\\left. \cblue{\begin{array}{lll}
	\bs{\sigma}_s =\tau_s\frac{\mathcal{E}\mathbf{u}}{|\mathcal{E}\mathbf{u}|}, & \text{if }\abs{\Eps\bs{u}} \neq 0, \vspace{0.2cm}\\
	|\bs{\sigma}_s |\leq \tau_s, & \text{if }\abs{\Eps\bs{u}} = 0,
\end{array}}\right\},&\mbox{in $\Omega$}\vspace{0.2cm}\\\mathbf{u}=\cblue{\mb{u}^D},&\mbox{on $\partial\Omega$,}
\end{array}
\end{equation}
where \cblue{$\mb{u}$ is the velocity, $\phi$ the pressure}, \cblue{$\bs{\sigma}_s$ the deviatoric stress tensor}, $\tau_s>0$ stands for the plasticity threshold (yield stress), $\mathbf{f}\in \mathbf{L}^{p'}(\Omega)$ \cblue{and $\mb{u}^D$ must satisfy the compability condition $\int_{\partial\Omega}\mb{u}^D\cdot\mb{n}\ds=0$}. Here \cblue{we assume a non-linear shear stress rate dependent viscosity with yield stress constitutive equation}, where $\nu(\cdot)$ is a nonlinear \cblue{function whose exact form depends on the particular rheological model}. This nonlinear \cblue{function} is usually given by
\begin{equation} \label{eq:Phi}
\Real^{+}\ni t\mapsto \nu(t):=\frac{\widetilde{\nu}\,'(t)}{t},
\end{equation}
where $\widetilde{\nu}:\Real^+\rightarrow\Real^+$ is a suitable $N$-function.  \cblue{Further, it is possible to show that (see \cite[Sec. 2.2.1]{KreuzPhD})
\[
\nu\left(\abs{\mathbf{T}}\right)\mb{T}= \widetilde{\nu}\,'(|\mathbf{T}|)\frac{\mb{T}}{|\mb{T}|},\,\,\mbox{for any tensor $\mb{T} = (\mb{T}_{ij})_{i,j = 1 , d}$}.
\]} 
Several models for viscoplastic materials fit this formulation, as we describe next:
\begin{itemize}
\item The Herschel-Bulkley model is given by the power law function, as follows
\[
\widetilde{\nu}(t):= \frac{\mu}{p}t^p,\,\mbox{for $1<p<\infty$}\,\,\mbox{  and  }\,\, \nu\left(\abs{\mathbf{T}}\right)\mb{T}=\mu |\mathbf{T}|^{p-2}\mathbf{T}.
\]
\item The Carreau model with yield (see \cite{Kim}) is given by
\[
\widetilde{\nu}(t):=\mu \int_0^t  (1+s^2)^{\frac{p-2}{2}}s\,ds,\,\mbox{for $1<p<\infty$}\,\,\mbox{  and  }\,\,  \nu\left(\abs{\mathbf{T}}\right)\mb{T}=\mu \left(1+|\mathbf{T}|^2\right)^{\frac{p-2}{2}}\mathbf{T}.
\]
\item The Casson model is given by
\[
\widetilde{\nu}(t):=\mu\frac{t^2}{2}+\frac{4}{3}\tau_s^{1/2} t^{3/2}\,\,\mbox{  and  }\,\,  \nu\left(\abs{\mathbf{T}}\right)\mb{T}=\mu \mathbf{T} + 2\left(\frac{\tau_2}{|\mathbf{T}|}\right)^{1/2}\mathbf{T}.
\]
\end{itemize}
In all the cases $\mu>0$ \cblue{is a model constant (i.e. constant plastic viscosity in the Bingham model) \cite{Papanastasiou1997}}. Note that the classical Bingham model corresponds to a particular case of the Herschel-Bulkley model, for $p=2$. Furthermore, other common models are described in \cite{ervin2008,KreuzPhD, Farhloul2017} and references therein. 

Also, we recall that the exponent $p$ in the following sections is a general representation of the suitable exponent according to the selected model.

The main characteristic of the viscoplastic fluids, described by the given models, is the presence of yielded and unyielded regions in the material. \cblue{That} is, regions where the material behaves as a non-Newtonian fluid (yielded regions) and regions where the material moves as a rigid solid (unyielded regions). Theoretically, the yielded and unyielded regions are given by the following sets
\[
\mathcal{A}(\mathbf{u}):=\{x\in \Omega\,:\, |\mathcal{E}\mathbf{u}(x)|\neq 0\}\quad\mbox{and}\quad\mathcal{I}(\mathbf{u}):=\{x\in \Omega\,:\, |\mathcal{E}\mathbf{u}(x)|= 0\},
\]
respectively.  As one can appreciate,  in the unyielded regions the material, seen as a rigid solid ($|\mathcal{E}(\mathbf{u})|=0$), is expected to have an infinite viscosity.  These nonlinear behavior, and the fact that, in general, one does not have an a priori knowledge about the yielded and unyielded regions in the domain, \cblue{represents} the main issue in the analysis of these models, making the identification of these regions a key issue in the analysis and numerical resolution of these materials \cite{Apos}.

Furthermore,  \eqref{Problem} is an ill-posed problem, since, \cblue{in general,} it does not have a unique solution. Therefore, we can face instabilities in several numerical schemes. Thus, in order to have a well posed problem and computable approximations of the yielded and unyielded regions,  a regularization approach \cblue{to the non-linear viscosity with yield stress model} is usually proposed.  However, the kind of regularization process is crucial in order to have reliable approximations for these materials.

We propose a local regularization, based on the Huber approach, for the system \eqref{Problem}, which yields the following system of PDEs
\cblue{\begin{equation}\label{Probreg}\tag{$\mathcal{RP}$}
\left\{\begin{array}{ccl}
-\Div \bs{\sigma}_{s,\gamma}+ \nabla \phi =\mathbf{f},&\mbox{in $\Omega$}\vspace{0.2cm}\\\nabla\cdot \mathbf{u} =0,&\mbox{in $\Omega$}\vspace{0.2cm}\\\bs{\sigma}_{s,\gamma}=\nu(|\mathcal{E}\mathbf{u}|)\mathcal{E}\mathbf{u}+\tau_s\gamma\frac{\mathcal{E}\mathbf{u}}{|\mathcal{E}\mathbf{u}|_\gamma}&\mbox{in $\Omega$}\vspace{0.2cm}\\ \mathbf{u}=\cblue{\mb{u}^D},&\mbox{on $\partial\Omega$}
\end{array}\right.
\end{equation}}Here for a parameter $\gamma > 0$, $|\mathcal{E}\mathbf{u}|_\gamma:=\max(\tau_s, \gamma|\mathcal{E}\mathbf{u}|)$.  In the next section, we will prove that the Huber regularized system \eqref{Probreg}, in a dual mixed formulation, is a well posed problem with a unique solution, for each $\gamma>0$. 

The main idea behind this regularization is to change the plastic component of the shear dependent viscosity $\tau_s \frac{\mathcal{E}\mathbf{u}}{|\mathcal{E}\mathbf{u}|}$ by the following expression $\tau_s\gamma\frac{\mathcal{E}\mathbf{u}}{|\mathcal{E}\mathbf{u}|_\gamma}$, which depends on the parameter $\gamma$. By doing this, we obtain the following approximations for the yielded and unyielded regions sets $\mathcal{A}$ and $\mathcal{I}$
\[
\mathcal{A}_\gamma(\mathbf{u}):=\{x\in \Omega\,:\, \gamma|\mathcal{E}\mathbf{u}|\geq \tau_s\}\quad\mbox{and}\quad \mathcal{I}_\gamma(\mathbf{u}):=\{x\in \Omega\,:\, \gamma|\mathcal{E}\mathbf{u}|< \tau_s\},
\]
respectively.  This approach allows us, for a sufficiently big $\gamma$,  to recover the actual non-Newtonian behavior of the fluid in $\mathcal{A}_\gamma$, while the theoretical infinite viscosity of the material in the unyielded regions is correctly approximated by a ``sufficiently big'' viscosity in $\mathcal{I}_\gamma$.  Consequently, we can assure that the Huber approach allows to \cblue{perform} reliable numerical simulations of the fluids, preserving the qualitative properties of the model in most of the domain, since the effect of the regularization is focused only on a small neighbourhood of the unyielded regions.  

Because of the fact that the viscosity is fixed on $\mathcal{A}_\gamma$ and $\mathcal{I}_\gamma$, for $\gamma>0$, this approach is also known as a bi-viscosity regularization (see \cite{bever}). 

\section{Dual Mixed approximation} \label{sec:dual-dual}
In this section based on the works of \cite{ervin2008} and \cite{Barrientos2002} we devise a model for stationary flows considering a dependency of the viscosity on the fluid deformation tensor, and the presence of yielded and unyielded regions in the material. In order to obtain the dual-mixed formulation, we proceed as in \cite{almonacid2020} and set the strain rate tensor as an auxiliary unknown
\[
\bs{\theta}= \Eps \mb{u} =  \nabla \mb{u} - \bs{\gamma}(\mb{u}),
\]
where $\bs{\gamma}(\mb{u}) = \frac{1}{2}(\nabla \mb{u} - (\nabla \mb{u})^T)$  is the skew-symmetric part of the velocity gradient tensor. Further, for any $p'>1$ and $p$ such that $1/p + 1/p' = 1$, we define
\[
\Hdiv \coloneqq \{\bs{\tau} \in \mathbb{L}^{p'}(\Omega) \,:\, \Div \bs{\tau} \in \mb{L}^{p'}(\Omega) \}\,\,\mbox{  and  }\,\,\Lskew \coloneqq \{\bs{\gamma} \in \mathbb{L}^{p}(\Omega) \,:\, \bs{\gamma}+\bs{\gamma}^T = 0 \}.
\]
The introduction of the new unknown $\bs{\theta}$ leads us to the following representation of the total stress tensor
\[
\bs{\sigma} = \nu(|\bs{\theta}|)\bs{\theta}+ \gamma \tau_{s} \frac{\bs{\theta}}{|\bs{\theta}|_{\gamma}}  - \phi \mathbb{I}.
\]
Here $\mathbb{I}$ is the identity tensor of dimension $d\times d$. Hence, by introducing $\hat{\mb{u}} := \bs{\gamma}(\mb{u}) \in \Lskew $ as a new variable from \eqref{Problem}, we arrive at the following system: 

\begin{equation} \label{eq:system_mix_dual}
	\begin{array}{rllc}
	\bs{\theta}&= \nabla \mb{u} - \hat{\mb{u}} = \Eps \mb{u},  &\mathrm{\text{in }} \Omega, &\\
	\bs{\sigma} &= \nu(|\bs{\theta}|)\bs{\theta} + \gamma \tau_{s} \frac{\bs{\theta}}{|\bs{\theta}|_{\gamma}} - \phi \mathbb{I}, &\mathrm{\text{in }} \Omega, &\\
	-\Div(\bs{\sigma}) &= \mb{f}, &\mathrm{\text{in }} \Omega, &\\
	\tr(\bs{\theta}) & =0, &\mathrm{\text{in }} \Omega, &\\
	\mb{u} &= \cblue{\mb{u}^D}, &\mathrm{\text{on }} \partial\Omega.& 
	\end{array}
\end{equation}

Note that the equation $\tr(\bs{\theta})=0$ stands for the incompressibility condition. 

\subsection{Weak formulation}
Given $\mb{f} \in \mb{L}^{p'}(\Omega)$, testing each equation in problem \eqref{eq:system_mix_dual} against suitable functions and integrating by parts whenever adequate, gives: find $(\bs{\theta}, \bs{\sigma} ,\phi, \mb{u},\hat{\mb{u}}, \lambda) \in \mathbb{L}^{p}(\Omega)\times \Hdiv \times L^{p'}(\Omega) \times \mb{L}^{p}(\Omega)\times \Lskew \times \mathbb{R}$ such that
\begin{equation} \label{eq:var_mod}
	\begin{aligned}
	-\int_{\Omega} \bs{\theta} \,:\,\bs{\tau} \dx - \int_{\Omega} \psi \tr(\bs{\theta}) \dx - \int_{\Omega} \mb{u} \cdot \Div \bs{\tau} \dx
	 \\  - \int_{\Omega} \hat{\mb{u}} : \bs{\tau} \dx + \lambda \int_{\Omega} \tr(\bs{\tau}) \dx &= \cblue{-\int (\bs{\tau}\cdot\bs{n})\cdot\mb{u}^D\ds}\\
	\int_{\Omega} \nu(|\bs{\theta}|)(\bs{\theta} : \bs{\xi}) \dx + \gamma \tau_{s} \int_{\Omega} \frac{1}{|\bs{\theta}|} (\bs{\theta} : \bs{\xi}) \dx - \int_{\Omega} \bs{\sigma} : \bs{\xi} \\
	- \int_{\Omega} \phi \tr(\bs{\xi}) \dx &=0 \\
	-\int_{\Omega} \mb{v} \cdot \Div \bs{\sigma} \dx - \int_{\Omega} \hat{\mb{v}}: \bs{\sigma} \dx + \eta \int_{\Omega} \tr(\bs{\sigma}) \dx &= \int_{\Omega} \mb{v} \cdot \mb{f} \dx
	\end{aligned}
\end{equation}
for all $(\bs{\tau},\psi) \in \Hdiv \times L^{p'}(\Omega)$, $\bs{\xi} \in \mathbb{L}^{p}(\Omega)$ and $(\mb{v},\hat{\mb{v}},\eta) \in  \mb{L}^{p}(\Omega)\times \Lskew \times \mathbb{R}$.

\cblue{Here the Dirichlet boundary condition is imposed as a natural boundary condition. However other common boundary conditions can also be studied with a similar formulation. For example, mixed conditions involving the normal stress can be added as essential boundary conditions imposed in the space of tensor $\bs{\sigma}$. The following analysis still carries on for this setting, since an appropriate inf-sup condition still holds for that space (see \cite[Lemma 4.3]{Barrientos2002}).}

Similar to \cite{almonacid2020, arnold2007} the symmetry condition for the space of matrix fields $\Hdiv$ is enforced weakly through the introduction of the Lagrange multiplier $\hat{\mb{v}}$. Also, the condition of zero-averaged fluid pressure (translated in terms of tensor traces and required for uniqueness of the solution) is imposed through a real Lagrange multiplier $\eta$.

Now, the forms $a_{\gamma}$, $b$ and $c$ are defined, for all $\bs{\theta}, \bs{\xi}  \in \mathbb{L}^{p}(\Omega)$, $\bs{\tau} \in  \Hdiv$, $\psi \in L^{p'}(\Omega)$, $\phi \in  L^p(\Omega)$, $\eta \in \mathbb{R}$, $\hat{\mb{v}} \in \Lskew$ and $\mb{v} \in \mb{L}^{p}(\Omega)$, as follows:
\begin{align*}
a_{\gamma}(\bs{\theta}, \bs{\xi}) &\coloneqq \int_{\Omega} \nu(|\bs{\theta}|)(\bs{\theta} : \bs{\xi}) \dx + \gamma \tau_{s} \int_{\Omega} \frac{1}{|\bs{\theta}|_{\gamma}} (\bs{\theta} : \bs{\xi}) \dx, \\
b(\bs{\theta},(\bs{\tau},\psi)) &\coloneqq - \int_{\Omega} \bs{\theta} : \bs{\tau} \dx - \int_{\Omega}\psi \tr(\bs{\theta}) \dx,  \\
c((\bs{\sigma},\phi),(\mb{v},\hat{\mb{v}},\eta)) &\coloneqq -\int_{\Omega} \mb{v} \cdot \Div \bs{\sigma} \dx + \eta \int_{\Omega} \tr(\bs{\sigma}) \dx - \int_{\Omega} \hat{\mb{v}} : \bs{\sigma} \dx.
\end{align*}
In this way, system \eqref{eq:var_mod} can be written as a twofold saddle point problem of the form:
\begin{subequations} \label{eq:saddle-point-problem}
\begin{align}
a_{\gamma}(\bs{\theta},\bs{\xi}) + b^t(\bs{\xi},(\bs{\sigma},\phi)) &= 0 &&\text{for all } \bs{\xi} \in \mathbb{L}^{p}(\Omega) \label{eq:spp_1} \\
b(\bs{\theta},(\bs{\tau},\psi)) + c^t((\bs{\tau},\psi),(\bf{u},\hat{\mb{u}},\lambda)) &=\cblue{-\int (\bs{\tau}\cdot\bs{n})\cdot\mb{u}^D\ds} &&\text{for all }(\bs{\tau}, \psi) \in \Hdiv \times L^{p'}(\Omega) \label{eq:spp_2}\\
c((\bs{\sigma}, \phi),(\mb{v},\hat{\mb{v}},\eta)) &= \int_{\Omega} \mb{f} \cdot \mb{v} && \text{for all } (\mb{v},\eta) \in  \mb{L}^{p}(\Omega)\times \mathbb{R}.
\end{align}
\end{subequations}

\subsection{ Well-posedness of the problem}

In this section, we discuss the existence and uniqueness of a solution to \eqref{eq:saddle-point-problem} with focus on the considered models. The proof of this result is based on the general theory
of saddle point problems, including Banach-Ne\v{c}as-Babu\v{s}ka Theorem, and the classical Babu\v{s}ka-Brezzi theory, both in Banach spaces (see \cite[Theorems 2.6, 2.34]{ern2004}), and monotone operator theory. We first recall some results that will be used to establish the solvability in Theorem \ref{thm:wellp_var_mod}. 
Note that some results are similar to those used to prove the solvability of the problem in \cite{ervin2008}, we still mention the main ones for the sake of completeness.

We begin with the following auxiliary result related \cblue{to} the stability of the regularization. 


\begin{lem}\label{lem:E<gamma}
	Let $\boldsymbol{\theta},\,\boldsymbol{\vartheta}\in \mathbb{L}^p(\Omega)$. Then, the following estimate holds
	\begin{equation}\label{E<gamma}
		|\boldsymbol{\theta}(x)|_\gamma - |\boldsymbol{\vartheta}(x)|_\gamma\leq \gamma |\boldsymbol{\theta}(x) - \boldsymbol{\vartheta}(x)|,\,\mbox{ a.e. in $\Omega$}.
	\end{equation}
\end{lem}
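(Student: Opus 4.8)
The plan is to reduce the claim to two elementary Lipschitz facts and argue pointwise, which suffices since the estimate is asserted only almost everywhere. I would fix a point $x\in\Omega$ and recall that, by definition of the regularized modulus, $|\boldsymbol{\theta}(x)|_\gamma=\max(\tau_s,\gamma|\boldsymbol{\theta}(x)|)$ and $|\boldsymbol{\vartheta}(x)|_\gamma=\max(\tau_s,\gamma|\boldsymbol{\vartheta}(x)|)$. Writing $s:=\gamma|\boldsymbol{\theta}(x)|$ and $t:=\gamma|\boldsymbol{\vartheta}(x)|$, the inequality to be shown becomes $\max(\tau_s,s)-\max(\tau_s,t)\le \gamma|\boldsymbol{\theta}(x)-\boldsymbol{\vartheta}(x)|$.

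The first ingredient is that, for a fixed constant $c$, the scalar map $r\mapsto\max(c,r)$ is nondecreasing and $1$-Lipschitz on $\Real$; in particular $\max(\tau_s,s)-\max(\tau_s,t)\le |s-t|$. The second ingredient is the reverse triangle inequality for the Frobenius norm, namely $\bigl||\boldsymbol{\theta}(x)|-|\boldsymbol{\vartheta}(x)|\bigr|\le|\boldsymbol{\theta}(x)-\boldsymbol{\vartheta}(x)|$, which yields $|s-t|=\gamma\bigl||\boldsymbol{\theta}(x)|-|\boldsymbol{\vartheta}(x)|\bigr|\le\gamma|\boldsymbol{\theta}(x)-\boldsymbol{\vartheta}(x)|$. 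Chaining these two estimates gives the assertion directly.

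If one prefers to avoid invoking the Lipschitz property of $\max(c,\cdot)$ as a black box, I would instead split into the three exhaustive cases according to whether $s$ and $t$ exceed $\tau_s$: when $s\le\tau_s$ the left-hand side is $\tau_s-\max(\tau_s,t)\le 0$ and there is nothing to prove; when $s>\tau_s\ge t$ one uses $-\tau_s\le -t$ to bound $s-\tau_s\le s-t$; and when both exceed $\tau_s$ the left-hand side equals $s-t$ exactly. In each branch the bound $s-t\le\gamma|\boldsymbol{\theta}(x)-\boldsymbol{\vartheta}(x)|$ then follows from the reverse triangle inequality as above.

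I do not anticipate a genuine obstacle here: the statement is a pointwise real-variable inequality and the $\mathbb{L}^p$ setting plays no role beyond guaranteeing that representatives are defined almost everywhere. The only point requiring a modicum of care is recording the monotonicity/$1$-Lipschitz behaviour of the truncation $\max(\tau_s,\cdot)$ so that the subtraction (rather than the absolute difference) appearing on the left is correctly controlled; the case analysis above makes this explicit and removes any ambiguity.
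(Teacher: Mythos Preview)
Your proof is correct and follows essentially the same route as the paper: a pointwise case analysis on whether $\gamma|\boldsymbol{\theta}(x)|$ and $\gamma|\boldsymbol{\vartheta}(x)|$ lie above or below $\tau_s$, combined with the reverse triangle inequality for the Frobenius norm. The paper carries out four cases (indexed by the sets $\mathcal{A}_\gamma$, $\mathcal{I}_\gamma$ for each tensor), whereas your alternative merges the two ``left-hand side $\le 0$'' cases into one; your primary argument via the $1$-Lipschitz property of $r\mapsto\max(\tau_s,r)$ is just a cleaner packaging of the same computation.
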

\begin{proof}
Let us start by noticing that, because of the introduction of the auxiliary unknown $\bs{\theta}$, the approximations for the yielded and unyielded regions are rewritten as
\[
\mathcal{A}_\gamma(\bs{\theta}):=\{x\in \Omega\,:\, \gamma|\bs{\theta}(x)|\geq \tau_s\}\quad\mbox{and}\quad \mathcal{I}_\gamma(\bs{\theta}):=\{x\in \Omega\,:\, \gamma|\bs{\theta}(x)|< \tau_s\}.
\]
Next, we analyze the behaviour of \eqref{E<gamma} in the following sets $\mathcal{A}_\gamma(\bs{\theta})\cap \mathcal{A}_\gamma(\bs{\vartheta})$, $\mathcal{A}_\gamma(\bs{\theta})\cap \mathcal{I}_\gamma(\bs{\vartheta})$, $\mathcal{I}_\gamma(\bs{\theta})\cap \mathcal{A}_\gamma(\bs{\vartheta})$ and $\mathcal{I}_\gamma(\bs{\theta})\cap \mathcal{I}_\gamma(\bs{\vartheta})$. \\
	
	\noindent \underline{On $\mathcal{A}_\gamma(\bs{\theta})\cap \mathcal{A}_\gamma(\bs{\vartheta})$}: Here, we have that
	\[
	|\boldsymbol{\theta}(x)|_\gamma - |\boldsymbol{\vartheta}(x)|_\gamma= \gamma(|\boldsymbol{\theta}(x)| - |\boldsymbol{\vartheta}(x)|)\leq \gamma  |\boldsymbol{\theta}(x) -\boldsymbol{\vartheta}(x)|.
	\]
	
	\noindent \underline{On $\mathcal{A}_\gamma(\bs{\theta})\cap \mathcal{I}_\gamma(\bs{\vartheta})$,}: Here, it holds that
	\[
	|\boldsymbol{\theta}(x)|_\gamma - |\boldsymbol{\vartheta}(x)|_\gamma= \gamma|\boldsymbol{\theta}(x)| - \tau_s < \gamma|\boldsymbol{\theta}(x)|- \gamma|\boldsymbol{\vartheta}(x)|\leq  \gamma |\boldsymbol{\theta}(x) -\boldsymbol{\vartheta}(x)|.
	\]
	
	\noindent \underline{On $\mathcal{I}_\gamma(\bs{\theta})\cap \mathcal{A}_\gamma(\bs{\vartheta})$}: Here, we know that
	\[
	|\boldsymbol{\theta}(x)|_\gamma - |\boldsymbol{\vartheta}(x)|_\gamma= \tau_s - \gamma|\boldsymbol{\vartheta}(x)| \leq \tau_s-\tau_s=0\leq  \gamma |\boldsymbol{\theta}(x) -\boldsymbol{\vartheta}(x)|.
	\]
	
	\noindent \underline{On $\mathcal{I}_\gamma(\bs{\theta})\cap \mathcal{I}_\gamma(\bs{\vartheta})$}: Here, we obtain the following
	\[
	|\boldsymbol{\theta}(x)|_\gamma - |\boldsymbol{\vartheta}(x)|_\gamma= \tau_s-\tau_s=0\leq  \gamma |\boldsymbol{\theta}(x) - \boldsymbol{\vartheta}|.
	\]
Thus, since the considered sets provide a disjoint partitioning of $\Omega$, the four estimates above imply \eqref{E<gamma}.
\end{proof}

Now, we can \cblue{establish} the following theorem concerning the coerciveness, strict monotonicity, and hemicontinuity of the operator $\mathbf{A}_{\gamma}$ associated with the form $a_{\gamma}$.

\begin{lem}\label{lem:opAmon}
	Let $1<p<\infty$. The operator $\mathbf{A}_\gamma:\mathbb{L}^{p}(\Omega)\rightarrow [\mathbb{L}^{p'}(\Omega)]^*$, defined as
	\begin{equation*}
	\langle \mathbf{A}_\gamma(\bs{\theta})\,,\,\bs{\xi}\rangle_{[\mathbb{L}^{p'}(\Omega)]^*,\mathbb{L}^{p}(\Omega)}:= a_{\gamma}(\bs{\theta},\bs{\xi}), \mbox{  for all $\bs{\xi}\in \mathbb{L}^{p}(\Omega)$},
	\end{equation*}
	is coercive, hemicontinuous and strictly monotone.
\end{lem}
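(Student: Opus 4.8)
The plan is to split the operator according to the two contributions in $a_\gamma$, writing $\mathbf{A}_\gamma=\mathbf{A}_\gamma^{\nu}+\mathbf{A}_\gamma^{s}$, where $\mathbf{A}_\gamma^{\nu}$ is generated by the pointwise (Nemytskii) map $\bs{\theta}\mapsto \nu(|\bs{\theta}|)\bs{\theta}=\widetilde{\nu}\,'(|\bs{\theta}|)\,\bs{\theta}/|\bs{\theta}|$ and $\mathbf{A}_\gamma^{s}$ by $\bs{\theta}\mapsto \gamma\tau_s\,\bs{\theta}/|\bs{\theta}|_\gamma$. A preliminary step is to record the growth bounds that ensure the integrals defining $a_\gamma(\bs{\theta},\cdot)$ make sense for every $\bs{\xi}\in\mathbb{L}^{p}(\Omega)$: for the considered models $|\nu(|\bs{\theta}|)\bs{\theta}|\le C(1+|\bs{\theta}|^{p-1})$, so that $\mathbf{A}_\gamma^{\nu}$ maps $\mathbb{L}^{p}(\Omega)$ into $\mathbb{L}^{p'}(\Omega)$ since $(p-1)p'=p$, whereas $|\gamma\tau_s\,\bs{\theta}/|\bs{\theta}|_\gamma|\le \tau_s$ pointwise, so $\mathbf{A}_\gamma^{s}$ even takes values in $\mathbb{L}^{\infty}(\Omega)\subset\mathbb{L}^{p'}(\Omega)$. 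This structure mirrors the analysis in \cite{ervin2008} for the viscosity term; the genuinely new ingredient is the Huber term.

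For coercivity I would test with $\bs{\xi}=\bs{\theta}$. The plasticity part is nonnegative, $\gamma\tau_s\int_\Omega |\bs{\theta}|^2/|\bs{\theta}|_\gamma\,\dx\ge 0$, and may simply be discarded, while the viscosity part satisfies $\nu(|\bs{\theta}|)|\bs{\theta}|^2=\widetilde{\nu}\,'(|\bs{\theta}|)|\bs{\theta}|\ge c\,|\bs{\theta}|^p-C$ for the power-law, Carreau and Casson choices of $\widetilde{\nu}$ (with $p=2$ in the Casson case). Hence $a_\gamma(\bs{\theta},\bs{\theta})\ge c\,\norm{\bs{\theta}}_{\mathbb{L}^{p}(\Omega)}^p - C|\Omega|$, and dividing by $\norm{\bs{\theta}}_{\mathbb{L}^{p}(\Omega)}$ and letting the norm tend to infinity yields coercivity, because $p>1$.

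For monotonicity I would treat the two parts separately, exploiting that each is the gradient of a convex potential. The viscosity map equals $\nabla_{\bs{\theta}}\big(\widetilde{\nu}(|\bs{\theta}|)\big)$, and since $\widetilde{\nu}$ is a convex increasing $N$-function the potential $\bs{\theta}\mapsto\widetilde{\nu}(|\bs{\theta}|)$ is convex, so $\mathbf{A}_\gamma^{\nu}$ is monotone; the same inequality, sharpened, gives strict monotonicity -- for the power law this is the classical estimate for $\bs{\theta}\mapsto|\bs{\theta}|^{p-2}\bs{\theta}$, valid for every $1<p<\infty$ and strict whenever $\bs{\theta}\neq\bs{\vartheta}$. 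For the Huber term I would either recognize it as the gradient of the convex Huber potential $H_\gamma$ (piecewise $\tfrac{\gamma}{2}|\bs{\theta}|^2$ and $\tau_s|\bs{\theta}|-\tfrac{\tau_s^2}{2\gamma}$), or argue directly: after a Cauchy--Schwarz step $\bs{\theta}:\bs{\vartheta}\le|\bs{\theta}||\bs{\vartheta}|$ the quadratic form collapses to $(|\bs{\theta}|-|\bs{\vartheta}|)\big(|\bs{\theta}|/|\bs{\theta}|_\gamma-|\bs{\vartheta}|/|\bs{\vartheta}|_\gamma\big)$, which is nonnegative because the scalar map $t\mapsto t/\max(\tau_s,\gamma t)$ is nondecreasing. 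Adding the viscosity contribution (strictly positive for $\bs{\theta}\neq\bs{\vartheta}$) to the nonnegative plasticity contribution then gives strict monotonicity of $\mathbf{A}_\gamma$.

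Finally, hemicontinuity reduces to the continuity of $s\mapsto \langle \mathbf{A}_\gamma(\bs{\theta}+s\bs{\xi}),\bs{\zeta}\rangle$ on $[0,1]$: the integrands depend continuously on $s$ for a.e. $x$ (both pointwise maps are continuous on $\mathbb{R}^{d\times d}$) and are dominated on $[0,1]$ by $C(|\bs{\theta}|+|\bs{\xi}|)^{p-1}|\bs{\zeta}|+\tau_s|\bs{\zeta}|\in L^1(\Omega)$ via H\"older's inequality, so dominated convergence applies; alternatively the Lipschitz bound underlying Lemma~\ref{lem:E<gamma} yields continuity of the plasticity superposition operator directly. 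I expect the main obstacle to be strict monotonicity across the whole range $1<p<\infty$: all of the strictness must come from the viscosity term, since the Huber term is only monotone, and the power-law operator degenerates (for $p\ge 2$) or becomes singular (for $1<p<2$) near $\bs{\theta}=\bs{0}$, so the two regimes must be handled by the appropriate form of the vector $p$-Laplacian monotonicity inequality while keeping careful track of the behaviour at the origin.
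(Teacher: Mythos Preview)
Your proposal is correct and follows the same global outline as the paper (split into viscosity and Huber parts, discard the nonnegative Huber contribution for coercivity, use dominated convergence for hemicontinuity, and obtain strict monotonicity by combining a merely monotone Huber part with a strictly monotone viscosity part). The genuine difference is in how you establish monotonicity of the Huber term and strictness of the viscosity term. The paper rewrites
\[
\Bigl(\tfrac{\bs{\theta}}{|\bs{\theta}|_\gamma}-\tfrac{\bs{\vartheta}}{|\bs{\vartheta}|_\gamma}\Bigr):(\bs{\theta}-\bs{\vartheta})
=\tfrac{1}{|\bs{\theta}|_\gamma}\Bigl[|\bs{\theta}-\bs{\vartheta}|^2-\tfrac{|\bs{\theta}|_\gamma-|\bs{\vartheta}|_\gamma}{|\bs{\vartheta}|_\gamma}\,\bs{\vartheta}:(\bs{\theta}-\bs{\vartheta})\Bigr]
\]
and then invokes Lemma~\ref{lem:E<gamma} together with $|\bs{\vartheta}/|\bs{\vartheta}|_\gamma|\le 1/\gamma$ to conclude nonnegativity; your route via the convex Huber potential $H_\gamma$ (or the scalar monotonicity of $t\mapsto t/\max(\tau_s,\gamma t)$ after one Cauchy--Schwarz step) bypasses Lemma~\ref{lem:E<gamma} entirely and is more structural. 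Likewise, for strict monotonicity of the viscosity part the paper quotes from \cite{KreuzPhD} a quantitative lower bound $C\int_\Omega \widetilde{\nu}\,''(|\bs{\theta}|+|\bs{\vartheta}|)\,|\bs{\theta}-\bs{\vartheta}|^2\,\dx$ and then uses the $\Delta_2$-relation $\widetilde{\nu}\,''(t)\approx\widetilde{\nu}\,'(t)/t>0$, whereas you appeal directly to strict convexity of $\bs{\theta}\mapsto\widetilde{\nu}(|\bs{\theta}|)$ and the classical $p$-Laplacian inequalities. The paper's computation yields a quantitative inequality that is reused later (in the analysis of the SSN linearization), while your potential-based argument is shorter and isolates the structural reason each property holds.
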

\begin{proof}
We proceed by first proving the coercivity of $\mathbf{A}_\gamma$.
	\[
	\begin{array}{lll}
	\angles{\mathbf{A}_\gamma(\bs{\theta})\,,\,\bs{\theta}}_{[\mathbb{L}^{p'}(\Omega)]^*,\mathbb{L}^{p}(\Omega)} &=&\mu \int_\Omega \nu(|\bs{\theta}|)|\bs{\theta}|^2\,dx + \gamma\tau_s \int_\Omega \frac{|\bs{\theta}|^2}{|\bs{\theta}|_\gamma}\,dx\vspace{0.2cm}\\ &\geq&  \int_\Omega \nu(|\bs{\theta}|) |\bs{\theta}|^2\,dx.
	\end{array}
	\]
This last inequality implies the coercivity of the operator in the proposed models and in similar models given by $N$-functions. In fact, note that 
\begin{enumerate}
\item In the case of the Herschel-Bulkley model, we have that
\[
\int_\Omega \nu(|\bs{\theta}|) |\bs{\theta}|^2\,dx=\mu\int_\Omega |\bs{\theta}|^{p-2}|\bs{\theta}|^2\,dx=\mu\int_\Omega |\bs{\theta}|^p\,dx= \mu\|\bs{\theta}\|_{\mathbb{L}^p}^p.
\]
\item For the Carreau model, we know,  from \cite[Prop. 3.1]{Baranger} and \cite[Lem. 3.1]{Barret2}, that \cblue{there exists} positive constants $C_1$ and $C_2$ such that
\[
\int_\Omega \nu(|\bs{\theta}|) |\bs{\theta}|^2\,dx\geq \left\{\begin{array}{lll}
C_1\int_\Omega (1+|\bs{\theta}|^{p-2})|\bs{\theta}|^2\,dx,&\mbox{if $1<p<2$,}\vspace{0.2cm}\\C_2\int_\Omega |\bs{\theta}|^p\,dx,&\mbox{if $p\geq 2$}.
\end{array}\right.
\]
\item In the case of Casson model, we have that
\[
\int_\Omega \nu(|\bs{\theta}|) |\bs{\theta}|^2\,dx = \int_\Omega \left[\mu + 2\left(\frac{\tau_s}{|\bs{\theta}|}\right)^{1/2}\right]|\bs{\theta}|^2\,dx = \mu\int_\Omega |\bs{\theta}|^2\,dx + 2\tau^{1/2}\int_\Omega|\bs{\theta}|^{3/2}\,dx\geq \mu\|\bs{\theta}\|^2_{\mathbb{L}^2}.
\]
\end{enumerate}
Thanks to the estimators above, we can conclude that for the three models in study, there \cblue{exists} a positive constant $C$, depending on $p$, such that
	\[
	\dfrac{\langle\mathbf{A}_\gamma(\bs{\theta})\,,\,\bs{\theta}\rangle_{[\mathbb{L}^{p'}(\Omega)]^*,\mathbb{L}^{p}(\Omega)}}{\|\bs{\theta}\|^p_{\mathbb{L}^{p}(\Omega)}}\geq C\|\bs{\theta}\|^{p-1}_{\mathbb{L}^{p}(\Omega)},
	\]
	noticing that, in the Casson model, $p=2$.  Finally,  since $p>1$ in all the cases,  it follows that
	\begin{equation*}
	\dfrac{\langle \mathbf{A}_\gamma(\bs{\theta})\,,\,\bs{\theta}\rangle_{[\mathbb{L}^{p'}(\Omega)]^*,\mathbb{L}^{p}(\Omega}}{\|\bs{\theta}\|^p_{\mathbb{L}^{p}(\Omega)}}\rightarrow \infty\,\mbox{ as }\, \|\bs{\theta}\|_{\mathbb{L}^{p}(\Omega)}\rightarrow\infty.
	\end{equation*}
	
Further, we prove that $\mathbf{A}_\gamma$ is hemicontinuous, \textit{i.e.}, we prove that the function
	\begin{equation*}
	\begin{array}{lll}
	\Real\ni t\mapsto \langle \mathbf{A}_\gamma(\bs{\theta}+t\bs{\tau})\,,\,\bs{\psi}\rangle_{[\mathbb{L}^{p'}(\Omega)]^*,\mathbb{L}^{p}(\Omega)} = \int_\Omega \nu(|\bs{\theta}+ t\bs{\upsilon}|)[(\bs{\theta}+t\bs{\upsilon}):\bs{\psi}]\,dx \vspace{0.2cm}\\\hspace{5cm}+ \gamma\tau_s\int_\Omega \frac{1}{|\bs{\theta}+ t\bs{\upsilon}|_\gamma}[(\bs{\theta}+ t\bs{\upsilon}):\bs{\psi}]\,dx, \quad \forall \bs{\theta}, \bs{\upsilon}, \bs{\psi}\in \mathbb{L}^{p}(\Omega),
	\end{array}
	\end{equation*}
	is continuous. In this aim, let $\{t_n\}\subset(0,1)$ be a sequence convergent to $t\in[0,1]$.  Next in \cite[Lem. 48]{KreuzPhD} is proved that the  form $\int_\Omega \nu(|\bs{\theta}+ t\bs{\upsilon}|)[(\bs{\theta}+t\bs{\upsilon}):\bs{\psi}]\,dx $ is sequentially continuous, which implies that
	\begin{equation}\label{hemi1}
	\lim_{n\rightarrow\infty}\int_\Omega \nu(|\bs{\theta}+ t_n\bs{\upsilon}|)[(\bs{\theta}+t_n\bs{\upsilon}):\bs{\psi}]\,dx =\int_\Omega \nu(|\bs{\theta}+ t\bs{\upsilon}|)[(\bs{\theta}+t\bs{\upsilon}):\bs{\psi}]\,dx. 
	\end{equation}
Let us now focus on the term involving the Huber regularization $|\cdot|_\gamma$.  First, we recall that $|\cdot|_\gamma$ is a continuous function \cblue{in $\mathbb{L}^p(\Omega)$} and that $0<\tau_s\leq |\bs{\theta}+t\bs{\upsilon}|_\gamma$ a.e. in $\Omega$. Therefore, we have, for sufficiently large $n$, that
\[
\gamma\tau_s\frac{1}{|\bs{\theta}+ t_n\bs{\upsilon}|_\gamma}[(\bs{\theta}+ t_n\bs{\upsilon}):\bs{\psi}]\leq \gamma [(\bs{\theta}+ t_n\bs{\upsilon}):\bs{\psi}]\leq \gamma[|\bs{\theta}|+|\bs{\upsilon}|]|\bs{\psi}|.
\]
Since the function $x\mapsto \gamma[|\bs{\theta}(x)|+|\bs{\upsilon}(x)|]|\bs{\psi}(x)|\in\mathbb{L}^1(\Omega)$, the Lebesgue's dominated convergence theorem implies that
\begin{equation}\label{hemi2}
\lim_{n\rightarrow\infty}\gamma\tau_s\int_\Omega \frac{1}{|\bs{\theta}+ t_n\bs{\upsilon}|_\gamma}[(\bs{\theta}+ t_n\bs{\upsilon}):\bs{\psi}]\,dx = \gamma\tau_s\int_\Omega \frac{1}{|\bs{\theta}+ t\bs{\upsilon}|_\gamma}[(\bs{\theta}+ t\bs{\upsilon}):\bs{\psi}]\,dx.
\end{equation}
Clearly, \eqref{hemi1} and \eqref{hemi2} imply the hemicontinuity of  $\mathbf{A}_\gamma$.

Finally, we prove the strict monotonicity of $\mathbf{A}_\gamma$. First, by following \cite[Prop. 43]{KreuzPhD}, we can state that there exists a constant $C>0$ such that
	\begin{eqnarray}
	\langle \mathbf{A}_\gamma(\bs{\theta})- \mathbf{A}_\gamma(\bs{\psi})\,,\,\bs{\theta}-\bs{\psi}\rangle_{[\mathbb{L}^{p'}(\Omega)]^*,\mathbb{L}^{p}(\Omega)} &\geq& C \int_\Omega \cblue{\widetilde{\nu}\,''}(|\bs{\theta}| + |\bs{\psi}|) |\bs{\theta}-\bs{\psi}|^2\,dx\nonumber\vspace{0.2cm}\\ &&+ \gamma\tau_s\int_\Omega \left( \frac{\bs{\theta}}{|\bs{\theta}|_\gamma} -\frac{\bs{\psi}}{|\bs{\psi}|_\gamma}\right) :(\bs{\theta}  - \bs{\psi})\,dx,\label{eq:mon2}
	\end{eqnarray}
where $\cblue{\widetilde{\nu}}$	is the $N$-function defining each model. Let us focus on \eqref{eq:mon2}. Note that this term can be rewritten as
	
	\begin{equation*}
	\begin{array}{lll}
	\gamma\tau_s\int_\Omega \left[\left(\frac{1}{|\bs{\theta}|_\gamma} -\frac{1}{|\bs{\psi}|_\gamma}\right) \bs{\psi} + \frac{1}{|\bs{\theta}|_\gamma}\left(\bs{\theta}-\bs{\psi}\right)\right] :(\bs{\theta}  - \bs{\psi})\,dx\vspace{0.2cm}\\\hspace{3.4cm}= \gamma\tau_s\int_\Omega \left[ \frac{1}{|\bs{\theta}|_\gamma}(\bs{\theta} -\bs{\psi}) + \left(\frac{|\bs{\psi}|_\gamma - |\bs{\theta}|_\gamma}{|\bs{\psi}|_\gamma |\bs{\theta}|_\gamma}\right) \bs{\psi}\right] :(\bs{\theta} - \bs{\psi}) \,dx\vspace{0.2cm}\\\hspace{4.5cm} = \gamma\tau_s\int_\Omega \frac{1}{|\bs{\theta}|_\gamma} \left[ |\bs{\theta} -\bs{\psi}|^2 - \left(\frac{|\bs{\theta}|_\gamma - |\bs{\psi}|_\gamma}{|\bs{\psi}|_\gamma}\right) \bs{\psi} :(\bs{\theta} - \bs{\psi}) \right]\,dx.
	\end{array}
	\end{equation*}
Then, thanks to Lemma \ref{lem:E<gamma}, the Cauchy-Schwarz inequality,  and since $\left|\frac{\bs{\psi}(x)}{|\bs{\psi}(x)|_\gamma}\right|\leq \frac{1}{\gamma}$ a.e. in $\Omega$, we conclude that
	\begin{equation*}
	\begin{array}{lll}
\gamma\tau_s\int_\Omega \left[\left(\frac{1}{|\bs{\theta}|_\gamma} -\frac{1}{|\bs{\psi}|_\gamma}\right) \bs{\psi} + \frac{1}{|\bs{\theta}|_\gamma}\left(\bs{\theta}-\bs{\psi}\right)\right] :(\bs{\theta}  - \bs{\psi})\,dx\vspace{0.2cm}\\\hspace{3.4cm}\geq \gamma\tau_s\int_\Omega \frac{1}{|\bs{\theta}|_\gamma} \left[ |\bs{\theta} -\bs{\psi}|^2 - \gamma |\bs{\theta} - \bs{\psi} |^2 \left|\frac{\bs{\psi}(x)}{|\bs{\psi}(x)|_\gamma}\right|\right]\,dx\geq 0,
	\end{array}
	\end{equation*}
	which yields that
	\begin{equation*}
	\langle \mathbf{A}_\gamma(\bs{\theta})- \mathbf{A}_\gamma(\bs{\psi})\,,\,\bs{\theta}-\bs{\psi}\rangle_{\mathbb{L}^{p'}(\Omega),\mathbb{L}^{p}(\Omega)}\geq C \int_\Omega  \cblue{\widetilde{\nu}\,''}(|\bs{\theta}| + |\bs{\psi}|) |\bs{\theta}-\bs{\psi}|^2,dx.
	\end{equation*}
	Next, since all the \cblue{$N$-functions} associated with the given models satisfy the so-called $\Delta_2$-condition (see \cite[Def. 8]{KreuzPhD}), \textit{i.e}, there exists $M>0$ such that $\widetilde{\nu}(2t)\leq M\,\widetilde{\nu}(t)$, for $t\geq 0$, \cite[Rem. 42]{KreuzPhD} guarantees that 
\begin{equation}\label{Delta2}
\cblue{\widetilde{\nu}\,''}(t)\approx \frac{\cblue{\widetilde{\nu}\,'}(t)}{t}>0,\,\,\mbox{  on $(0,\infty)$},
\end{equation}
which implies that	
\begin{equation*}
	\langle \mathbf{A}_\gamma(\bs{\theta})- \mathbf{A}_\gamma(\bs{\psi})\,,\,\bs{\theta}-\bs{\psi}\rangle_{\mathbb{L}^{p'}(\Omega),\mathbb{L}^{p}(\Omega)}\geq C \int_\Omega  \cblue{\widetilde{\nu}\,''}(|\bs{\theta}| + |\bs{\psi}|) |\bs{\theta}-\bs{\psi}|^2,dx\geq 0,
	\end{equation*}
\textit{i.e.}, the operator $\mathbf{A}_\gamma$ is monotone.
	
	Now, in order to prove strict monotonicity, let us suppose that
	\[
	\langle \mathbf{A}_\gamma(\bs{\theta})- \mathbf{A}_\gamma(\bs{\psi})\,,\,\bs{\theta}-\bs{\psi}\rangle_{[\mathbb{L}^{p'}(\Omega)]^*,\mathbb{L}^{p}}=0.
	\]
	This expression immediately implies that
	\[
	\int_\Omega\cblue{\widetilde{\nu}\,''}(|\bs{\theta}| + |\bs{\psi}|) |\bs{\theta}-\bs{\psi}|^2,dx=0,
	\]
	which yields that $(\bs{\theta}-\bs{\psi})=0$ a.e. in $\Omega$. Therefore, $\bs{\theta}=\bs{\psi}$ in $\mathbb{L}^{p}(\Omega)$.
\end{proof}


We then need some properties of the \cblue{bilinear} forms $b(\cdot,\cdot)$ and $c(\cdot,\cdot)$ respectively. We characterize the kernel $\mb{K_1}$ of the bilinear form $c(\cdot, \cdot)$ as follows
\begin{align*}
\mb{K_1} &\coloneqq \left\{ (\bs{\sigma}, \phi)\in \Hdiv \times L^{p'}(\Omega) \,:\, c((\bs{\sigma}, \phi),(\mb{v},\hat{\mb{v}}, \eta)) = 0 \right\} \\
&\coloneqq \left\{ (\bs{\sigma}, \phi)\in\Hdiv\times L^{p'}(\Omega) \,:\, \Div \bs{\sigma} = 0 \text{ in } \Omega, \int_{\Omega} \tr(\bs{\sigma})\,\dx = 0, \text{ and } \bs{\sigma} = \bs{\sigma}^t \text{ in } \Omega \right\},
\end{align*}
and state the inf-sup condition of the form $b(\cdot,\cdot)$ in kernel $\mb{K_1}$, in the following Lemma.

\begin{lem} \label{lem:infsup-b}
	There exists $C_b > 0$, such that
	\begin{align*}
	\inf_{(\bs{\tau},\psi) \in \mb{K}_1}  \, \sup_{\bs{\theta}\in \mathbb{L}^{p}(\Omega)} \frac{b(\bs{\theta},(\bs{\tau},\psi))}{\norm{\bs{\theta}}_{\mathbb{L}^{p}(\Omega)} \norm{(\bs{\tau},\psi)}_{ \Hdiv\times L^{p'}(\Omega)}} &\geq C_b.
	\end{align*}
\end{lem}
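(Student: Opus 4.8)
The plan is to collapse the form into a single vector-field pairing and reduce the statement to one norm inequality. Since $\tr(\bs{\theta})=\bs{\theta}:\mathbb{I}$, the form rewrites as
\begin{equation*}
b(\bs{\theta},(\bs{\tau},\psi)) = -\int_\Omega \bs{\theta} : (\bs{\tau} + \psi\mathbb{I})\dx .
\end{equation*}
For fixed $(\bs{\tau},\psi)$, the $L^p$--$L^{p'}$ duality (valid as $1<p<\infty$) gives $\sup_{\bs{\theta}\in\mathbb{L}^{p}(\Omega)} b(\bs{\theta},(\bs{\tau},\psi))/\norm{\bs{\theta}}_{\mathbb{L}^{p}(\Omega)}=\norm{\bs{\tau}+\psi\mathbb{I}}_{\mathbb{L}^{p'}(\Omega)}$, the supremum being attained along the duality map $\bs{\theta}=-|\bs{\tau}+\psi\mathbb{I}|^{p'-2}(\bs{\tau}+\psi\mathbb{I})$. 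Since every $(\bs{\tau},\psi)\in\mb{K}_1$ satisfies $\Div\bs{\tau}=0$, the norm $\norm{(\bs{\tau},\psi)}_{\Hdiv\times L^{p'}(\Omega)}$ is equivalent to $\norm{\bs{\tau}}_{\mathbb{L}^{p'}(\Omega)}+\norm{\psi}_{L^{p'}(\Omega)}$, so the Lemma is equivalent to the estimate
\begin{equation*}
\norm{\bs{\tau}+\psi\mathbb{I}}_{\mathbb{L}^{p'}(\Omega)} \ge C_b\bigl(\norm{\bs{\tau}}_{\mathbb{L}^{p'}(\Omega)}+\norm{\psi}_{L^{p'}(\Omega)}\bigr),\qquad (\bs{\tau},\psi)\in\mb{K}_1. \tag{$\star$}
\end{equation*}

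To prove $(\star)$ I would split $\bs{\tau}=\bs{\tau}^D+\tfrac1d q\,\mathbb{I}$ with $q:=\tr(\bs{\tau})$ into deviatoric and spherical parts. Their pointwise orthogonality yields $|\bs{\tau}+\psi\mathbb{I}|^2=|\bs{\tau}^D|^2+\tfrac1d(q+d\psi)^2$ a.e.\ in $\Omega$, whence the two cheap bounds $\norm{\bs{\tau}^D}_{\mathbb{L}^{p'}(\Omega)}\le\norm{\bs{\tau}+\psi\mathbb{I}}_{\mathbb{L}^{p'}(\Omega)}$ and $\norm{q+d\psi}_{L^{p'}(\Omega)}\le\sqrt{d}\,\norm{\bs{\tau}+\psi\mathbb{I}}_{\mathbb{L}^{p'}(\Omega)}$ follow at once. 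These control the deviatoric part of $\bs{\tau}$ and the single combination $q+d\psi$, but not $q$ and $\psi$ separately.

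The heart of the argument --- and the step where both remaining constraints of $\mb{K}_1$ are indispensable --- is to separate $q$ and $\psi$. From $\Div\bs{\tau}=0$ and the splitting one obtains $\nabla q=-d\,\Div\bs{\tau}^D$ in the distributional sense, and the Green formula on $\Hdiv$ gives $\norm{\nabla q}_{W^{-1,p'}(\Omega)}\le d\,\norm{\bs{\tau}^D}_{\mathbb{L}^{p'}(\Omega)}$. Because $q$ has zero mean (this is precisely $\int_\Omega\tr(\bs{\tau})\dx=0$), the Ne\v{c}as inequality on the bounded Lipschitz domain $\Omega$ yields $\norm{q}_{L^{p'}(\Omega)}\le C\norm{\nabla q}_{W^{-1,p'}(\Omega)}$, so $\norm{q}_{L^{p'}(\Omega)}$ is controlled by $\norm{\bs{\tau}+\psi\mathbb{I}}_{\mathbb{L}^{p'}(\Omega)}$. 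Then $\norm{\psi}_{L^{p'}(\Omega)}\le\tfrac1d\bigl(\norm{q+d\psi}_{L^{p'}(\Omega)}+\norm{q}_{L^{p'}(\Omega)}\bigr)$ is controlled as well, and finally $\norm{\bs{\tau}}_{\mathbb{L}^{p'}(\Omega)}\le\norm{\bs{\tau}+\psi\mathbb{I}}_{\mathbb{L}^{p'}(\Omega)}+\sqrt{d}\,\norm{\psi}_{L^{p'}(\Omega)}$ closes $(\star)$; the inf-sup constant $C_b$ is the reciprocal of the accumulated constant.

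I expect the only real obstacle to be this separation step. The delicate points are justifying the distributional identity $\nabla q=-d\,\Div\bs{\tau}^D$ together with $\norm{\Div\bs{\tau}^D}_{W^{-1,p'}(\Omega)}\le\norm{\bs{\tau}^D}_{\mathbb{L}^{p'}(\Omega)}$, and invoking the Ne\v{c}as inequality in the $L^{p'}$ setting. An equivalent route avoids Ne\v{c}as by writing $\norm{\psi}_{L^{p'}(\Omega)}=\sup_{\norm{g}_{L^{p}(\Omega)}=1}\int_\Omega\psi g\dx$, decomposing $g$ into its mean and a zero-mean part $\tilde g$, solving $\nabla\cdot\mb{w}=\tilde g$ with the Bogovski\u{\i} operator so that $\mb{w}\in\mb{W}^{1,p}_0(\Omega)$ with $\norm{\mb{w}}_{W^{1,p}(\Omega)}\lesssim\norm{g}_{L^{p}(\Omega)}$, and using $\int_\Omega\bs{\tau}:\nabla\mb{w}=0$ (from $\Div\bs{\tau}=0$ and $\mb{w}|_{\partial\Omega}=0$) to identify $\int_\Omega\psi\tilde g\dx=\int_\Omega(\bs{\tau}+\psi\mathbb{I}):\nabla\mb{w}\dx$, while the mean part is handled by $\int_\Omega\psi\dx=\tfrac1d\int_\Omega\tr(\bs{\tau}+\psi\mathbb{I})\dx$. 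Either route isolates the same structural fact: the constraints $\Div\bs{\tau}=0$ and $\int_\Omega\tr(\bs{\tau})\dx=0$ jointly annihilate the only direction, $\bs{\tau}=-\psi\mathbb{I}$ with $\psi$ constant, along which $\bs{\tau}+\psi\mathbb{I}$ could degenerate.
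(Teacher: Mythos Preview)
Your argument is correct. The paper does not give its own proof of this lemma; it simply cites \cite[Lemma~3.2]{ervin2008}. Your proposal supplies a complete self-contained argument: the reduction via $L^p$--$L^{p'}$ duality to the algebraic estimate $(\star)$ is immediate, and the separation of $q=\tr(\bs{\tau})$ from $\psi$ using $\Div\bs{\tau}=0$, $\int_\Omega q\dx=0$ and the Ne\v{c}as (equivalently Bogovski\u{\i}) inequality is the standard device for this type of bound. Note that you never use the symmetry constraint $\bs{\tau}=\bs{\tau}^t$ that is also part of $\mb{K}_1$; this is fine, since $(\star)$ holds on the larger set where only $\Div\bs{\tau}=0$ and $\int_\Omega\tr(\bs{\tau})\dx=0$ are imposed.
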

\begin{proof}
	See \cite[Lemma 3.2]{ervin2008}
\end{proof}

We also need the following technical result.
\begin{lem} \label{lem:sup-uDivT}
	Let $\mathbb{H}^{p'}_{(\mathrm{div},0)}(\Omega) \coloneqq \left\{ \bs{\tau} \in \Hdiv \,:\, \int_{\Omega} \tr(\bs{\tau}) \dx =0\right\}$. Then, there exists $C>0$ such that, for any $\mb{u}\in \mb{L}^p(\Omega)$,
	\begin{align*}
	\sup_{\substack{\bs{\tau}_0 \in \mathbb{H}^{p'}_{(\mathrm{div},0)}(\Omega) \\ \bs{\tau}_0 \neq 0}} \frac{-\int_{\Omega} \mb{u}\cdot \Div \bs{\tau}_0 + \hat{\mb{u}} : \bs{\tau_0} \dx }{\norm{\bs{\tau}_0}_{\Hdiv}} \geq \sup_{\substack{\bs{\tau} \in \Hdiv \\ \bs{\tau} \neq 0}} \frac{-\int_{\Omega} \mb{u}\cdot \Div \bs{\tau} + \hat{\mb{u}} : \bs{\tau} \dx }{\norm{\bs{\tau}}_{\Hdiv}}.
	\end{align*}
\end{lem}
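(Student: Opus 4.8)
The plan is to exploit that the linear functional
\[
F(\bs{\tau}) := -\int_{\Omega} \mb{u}\cdot\Div\bs{\tau}\dx + \int_{\Omega}\hat{\mb{u}}:\bs{\tau}\dx,
\]
which defines both suprema, is insensitive to shifts of its argument by multiples of the identity tensor, and that such a shift lets us push any competitor $\bs{\tau}\in\Hdiv$ into the constrained space $\mathbb{H}^{p'}_{(\mathrm{div},0)}(\Omega)$ without altering the numerator while keeping the denominator under control. Reading the inequality with the constant $C$ announced in the statement, it is the nontrivial companion of the obvious bound coming from $\mathbb{H}^{p'}_{(\mathrm{div},0)}(\Omega)\subset\Hdiv$.

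First I would record two structural facts about $\mathbb{I}$. Since each row of $\mathbb{I}$ is constant we have $\Div\mathbb{I}=0$, and as $\Omega$ is bounded $\mathbb{I}\in\Hdiv$. Moreover, because $\hat{\mb{u}}$ is skew-symmetric, $\hat{\mb{u}}:\mathbb{I}=\tr(\hat{\mb{u}})=0$ a.e.\ in $\Omega$. Together these give $F(c\,\mathbb{I})=0$ for every $c\in\Real$, hence $F(\bs{\tau}+c\,\mathbb{I})=F(\bs{\tau})$ for all $\bs{\tau}\in\Hdiv$.

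Next, for an arbitrary $\bs{\tau}\in\Hdiv\setminus\{0\}$ I would introduce the projection
\[
\bs{\tau}_0 := \bs{\tau}-\frac{1}{d\,\abs{\Omega}}\Big(\int_{\Omega}\tr(\bs{\tau})\dx\Big)\mathbb{I},
\]
which satisfies $\int_{\Omega}\tr(\bs{\tau}_0)\dx=0$, so $\bs{\tau}_0\in\mathbb{H}^{p'}_{(\mathrm{div},0)}(\Omega)$, and by the invariance above $F(\bs{\tau}_0)=F(\bs{\tau})$. Since also $\Div\bs{\tau}_0=\Div\bs{\tau}$, only the $\mathbb{L}^{p'}$-part of the norm must be estimated. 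Here a Hölder bound for the averaged trace, $\abs{\int_{\Omega}\tr(\bs{\tau})\dx}\le\sqrt{d}\,\abs{\Omega}^{1/p}\norm{\bs{\tau}}_{\mathbb{L}^{p'}(\Omega)}$, combined with $\norm{\mathbb{I}}_{\mathbb{L}^{p'}(\Omega)}=\sqrt{d}\,\abs{\Omega}^{1/p'}$ and $1/p+1/p'=1$, yields $\norm{\bs{\tau}_0}_{\Hdiv}\le C'\norm{\bs{\tau}}_{\Hdiv}$ with $C'$ depending only on $d$.

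Finally I would assemble the bound. It suffices to consider $\bs{\tau}$ with $F(\bs{\tau})>0$, since $F$ is linear and both suprema are nonnegative. For such $\bs{\tau}$ the previous steps give
\[
\frac{F(\bs{\tau}_0)}{\norm{\bs{\tau}_0}_{\Hdiv}}=\frac{F(\bs{\tau})}{\norm{\bs{\tau}_0}_{\Hdiv}}\ge\frac{1}{C'}\,\frac{F(\bs{\tau})}{\norm{\bs{\tau}}_{\Hdiv}},
\]
and since $\bs{\tau}_0\in\mathbb{H}^{p'}_{(\mathrm{div},0)}(\Omega)$ the left side is dominated by the supremum over the constrained space; taking the supremum over $\bs{\tau}\in\Hdiv$ delivers the claim with $C=1/C'$. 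I expect the norm estimate of the third step to be the only genuine obstacle: for $p'=2$ the map $\bs{\tau}\mapsto\bs{\tau}_0$ is the orthogonal projection onto $\mathbb{H}^{p'}_{(\mathrm{div},0)}(\Omega)$ and is norm-nonincreasing, so one may take $C=1$; for $p'\neq2$ subtracting the trace mean may enlarge the $\mathbb{L}^{p'}$-norm, and it is precisely the Hölder control of the averaged trace that secures a finite, $\bs{\tau}$-independent constant.
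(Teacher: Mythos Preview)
Your argument is correct and follows the same route as the paper, which simply refers to \cite[Lemma 3.3]{ervin2008} together with the observation that $\hat{\mb{u}}^d=\hat{\mb{u}}$ (equivalently $\hat{\mb{u}}:\mathbb{I}=\tr(\hat{\mb{u}})=0$); this is precisely your key invariance $F(\bs{\tau}+c\,\mathbb{I})=F(\bs{\tau})$, and the mean-trace subtraction $\bs{\tau}\mapsto\bs{\tau}_0$ with the H\"older bound on $\norm{\bs{\tau}_0}_{\Hdiv}$ is the standard mechanism behind that reference. Your remark that the constant $C$ announced in the statement should multiply the right-hand supremum (and equals $1$ only when $p'=2$) is also the correct reading.
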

\begin{proof}
	The result follows from the same steps of the proof of \cite[Lemma 3.3]{ervin2008}, taking into account that $\hat{\mb{u}}^d = \hat{\mb{u}}$ and $\bs{\tau}^d:\bs{\sigma} = \bs{\tau}^d:\bs{\sigma}^d$.
\end{proof}

We are ready to prove the inf-sup condition for $c(\cdot,\cdot)$.

\begin{lem} \label{lem:inf-sup-c}
	There exists a constant $C_c > 0$ such that 
	\begin{align*}
	\inf_{(\mb{v},\hat{\mb{v}},\eta) \in \mb{L}^{p}(\Omega)\times\Lskew\times \mathbb{R} }  \, \sup_{(\bs{\sigma},\phi)\in \Hdiv \times L^{p}(\Omega)} \frac{c((\bs{\sigma}, \phi),(\mb{v},\eta))}{\norm{(\bs{\sigma},\phi)}_{ \Hdiv \times L^{p}(\Omega)} \norm{(\mb{v},\hat{\mb{v}},\eta)}_{\mb{L}^{p}(\Omega)\times \Lskew\times \mathbb{R}}} &\geq C_c.
	\end{align*}
\end{lem}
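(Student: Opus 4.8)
The plan is to read the inf-sup condition as a surjectivity statement and to build, for each given triple $(\mb{v},\hat{\mb{v}},\eta)$, an explicit test pair $(\bs{\sigma},\phi)$ realizing the bound. First I would note that $c$ does not depend on $\phi$, so one may take $\phi=0$ and work only with $\bs{\sigma}\in\Hdiv$, the denominator then reducing to $\norm{\bs{\sigma}}_{\Hdiv}$. The key structural observation is a three-fold orthogonality that decouples the three unknowns: a tensor proportional to the identity $\mathbb{I}$ is divergence-free and symmetric, hence it activates only the term $\eta\int_\Omega\tr(\bs{\sigma})\dx$; a symmetric tensor is annihilated by the skew field $\hat{\mb{v}}$; and a divergence-free tensor contributes nothing to $\int_\Omega\mb{v}\cdot\Div\bs{\sigma}\dx$. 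Exploiting this, I would treat $\eta$ with the choice $\bs{\sigma}=\operatorname{sgn}(\eta)\,\mathbb{I}$, which gives $c=|\eta|\,d\,|\Omega|$ with $\norm{\mathbb{I}}_{\Hdiv}$ constant, hence control of $|\eta|$. For the pair $(\mb{v},\hat{\mb{v}})$ I would first prove the bound over all of $\Hdiv$ and then invoke Lemma~\ref{lem:sup-uDivT} (applied with $\hat{\mb{u}}=-\hat{\mb{v}}$) to transfer it to the trace-free subspace $\mathbb{H}^{p'}_{(\mathrm{div},0)}(\Omega)$, whose elements do not see $\eta$; this is what makes the final combination clean.

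For the velocity component, let $\mb{w}\in\mb{W}^{1,p'}_0(\Omega)$ solve $-\Delta\mb{w}=|\mb{v}|^{p-2}\mb{v}$ and set $\bs{\tau}_1:=2\,\Eps\mb{w}-(\Div\mb{w})\,\mathbb{I}$. A direct computation gives the distributional identity $\Div\bs{\tau}_1=\Delta\mb{w}=-|\mb{v}|^{p-2}\mb{v}$, so $\bs{\tau}_1\in\Hdiv$ is symmetric with $-\int_\Omega\mb{v}\cdot\Div\bs{\tau}_1\dx=\norm{\mb{v}}_{\mb{L}^p(\Omega)}^p$, $\int_\Omega\hat{\mb{v}}:\bs{\tau}_1\dx=0$, and $\norm{\bs{\tau}_1}_{\Hdiv}\lesssim\norm{\mb{v}}_{\mb{L}^p(\Omega)}^{p-1}$. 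For the rotation, the skew field $-|\hat{\mb{v}}|^{p-2}\hat{\mb{v}}$ already pairs correctly with $\hat{\mb{v}}$ but fails to lie in $\Hdiv$; I would render it divergence-free by solving $\Delta\mb{y}=\Div(|\hat{\mb{v}}|^{p-2}\hat{\mb{v}})$ in $\mb{W}^{1,p'}_0(\Omega)$ (with right-hand side in $\mb{W}^{-1,p'}(\Omega)$) and setting $\bs{\tau}_2:=-|\hat{\mb{v}}|^{p-2}\hat{\mb{v}}+2\,\Eps\mb{y}-(\Div\mb{y})\,\mathbb{I}$. The symmetric correction leaves the skew part unchanged, so $\Div\bs{\tau}_2=0$, $-\int_\Omega\hat{\mb{v}}:\bs{\tau}_2\dx=\norm{\hat{\mb{v}}}_{\mathbb{L}^p(\Omega)}^p$, $\int_\Omega\mb{v}\cdot\Div\bs{\tau}_2\dx=0$, and $\norm{\bs{\tau}_2}_{\Hdiv}\lesssim\norm{\hat{\mb{v}}}_{\mathbb{L}^p(\Omega)}^{p-1}$. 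Both constructions rely only on the isomorphism $-\Delta:\mb{W}^{1,p'}_0(\Omega)\to\mb{W}^{-1,p'}(\Omega)$, i.e. on first-order $L^{p'}$ theory, since membership in $\Hdiv$ only requires the distributional divergence to lie in $\mb{L}^{p'}(\Omega)$.

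Testing with $\bs{\tau}_1+\bs{\tau}_2$ and using the two orthogonalities yields a value $\norm{\mb{v}}_{\mb{L}^p(\Omega)}^p+\norm{\hat{\mb{v}}}_{\mathbb{L}^p(\Omega)}^p$ against a denominator $\lesssim\norm{\mb{v}}_{\mb{L}^p(\Omega)}^{p-1}+\norm{\hat{\mb{v}}}_{\mathbb{L}^p(\Omega)}^{p-1}$; the elementary inequality $a^p+b^p\geq\tfrac13(a+b)(a^{p-1}+b^{p-1})$, a consequence of Young's inequality, converts this quotient into the desired $\gtrsim\norm{\mb{v}}_{\mb{L}^p(\Omega)}+\norm{\hat{\mb{v}}}_{\mathbb{L}^p(\Omega)}$. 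Passing to trace-free tensors via Lemma~\ref{lem:sup-uDivT} and reinstating the identity block, I would conclude by testing with $\bs{\sigma}=\bs{\tau}_0+\beta\operatorname{sgn}(\eta)\,\mathbb{I}$, where $\bs{\tau}_0$ is the near-optimal trace-free tensor for $(\mb{v},\hat{\mb{v}})$: since $\bs{\tau}_0$ is trace-free while $\mathbb{I}$ is symmetric and divergence-free, the three contributions add, and a standard choice of $\beta$ balances the blocks to deliver the global constant $C_c>0$.

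The main obstacle is the solvability and the $L^{p'}$ bounds of the two auxiliary Laplace problems on a merely Lipschitz domain: the isomorphism $-\Delta:\mb{W}^{1,p'}_0(\Omega)\to\mb{W}^{-1,p'}(\Omega)$ is unconditional only for $p'$ in a neighbourhood of $2$ (and for all $1<p'<\infty$ when $\Omega$ is convex or $C^{1,1}$), so one must either restrict the flow index $p$ accordingly or impose extra regularity on $\Omega$. A secondary technical point is verifying that $\bs{\tau}_2$ genuinely belongs to $\Hdiv$ with skew part exactly $-|\hat{\mb{v}}|^{p-2}\hat{\mb{v}}$; this follows once one checks the distributional identity $\Div(2\,\Eps\mb{y}-(\Div\mb{y})\mathbb{I})=\Delta\mb{y}$, which is precisely the step allowing us to dispense with any second-order regularity. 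Everything else, namely the orthogonality bookkeeping, the nonlinear test functions $|\cdot|^{p-2}(\cdot)$ used to extract the $p$-th powers, and the final balancing, is routine and parallels the Hilbert-space argument for weakly symmetric mixed formulations in \cite{ervin2008}.
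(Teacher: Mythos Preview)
Your proposal is correct and follows the same overall architecture as the paper's proof: exploit the orthogonality between the identity, symmetric, and skew blocks, build an explicit test tensor for each of $\eta$, $\mb{v}$, $\hat{\mb{v}}$, and invoke Lemma~\ref{lem:sup-uDivT} to pass to the trace-free subspace. The genuine difference lies in the auxiliary problems. For the velocity block the paper solves the \emph{nonlinear} $p$-Laplacian
\[
\int_\Omega |\Eps\mb{z}|^{p-2}\Eps\mb{z}:\Eps\mb{v}\dx=-\int_\Omega |\mb{u}|^{p-2}\mb{u}\cdot\mb{v}\dx,\qquad \mb{z}\in\mb{W}^{1,p}_0(\Omega),
\]
and takes $\tilde{\bs{\tau}}=|\Eps\mb{z}|^{p-2}\Eps\mb{z}\in\mathbb{L}^{p'}(\Omega)$, which is symmetric with $\Div\tilde{\bs{\tau}}=|\mb{u}|^{p-2}\mb{u}$; an analogous $p$-Laplacian problem handles $\hat{\mb{u}}$. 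These nonlinear problems are well-posed by monotone operator theory plus Korn's inequality for \emph{every} $1<p<\infty$ on any bounded Lipschitz domain, so the obstacle you identify simply disappears. Your linear-Laplacian route is conceptually lighter (no nonlinear auxiliary map, direct identity $\Div(2\Eps\mb{w}-(\Div\mb{w})\mathbb{I})=\Delta\mb{w}$), but it trades generality for simplicity: as you correctly flag, it rests on the isomorphism $-\Delta:\mb{W}^{1,p'}_0(\Omega)\to\mb{W}^{-1,p'}(\Omega)$, which on a merely Lipschitz domain forces either a restriction on $p$ or extra regularity of $\Omega$. The $p$-Laplacian trick is precisely the device that removes this restriction, and is the main point where the paper's argument improves on yours.
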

\begin{proof}
\cblue{Let} us consider first,
	\begin{align} \label{eq:sup_lambd}
	\sup_{(\bs{\tau},\phi)\in \Hdiv \times L^{p}(\Omega)} \frac{c((\bs{\tau}, \phi),(\mb{u},\hat{\mb{u}},\lambda))}{\norm{(\bs{\tau},\phi)}_{ \Hdiv \times L^{p}(\Omega)}} &\geq \frac{ c(\lambda \mathbb{I},0),(\mb{u},\hat{\mb{u}},\lambda)}{\norm{\lambda \mathbb{I}}_{\Hdiv}} \nonumber \\
	&= \frac{N\lambda^2\abs{\Omega}}{\abs{\lambda} N^{p'/2}\abs{\Omega}^{1/p'}} \nonumber \\
	& \geq C \abs{\lambda}
	\end{align}	
 Now, using Lemma \ref{lem:sup-uDivT}, we have that
	\begin{align} \label{eq:c_part1}
	\sup_{(\bs{\tau},\phi)\in \Hdiv \times L^{p}(\Omega)} \frac{c((\bs{\tau}, \phi),(\mb{u},\hat{\mb{u}},\lambda))}{\norm{(\bs{\tau},\phi)}_{ \Hdiv \times L^{p}(\Omega)}} &\geq \sup_{\bs{\tau}_0 \in \mathbb{H}^{p'}_{(\mathrm{div},0)}(\Omega) } \frac{-\int_{\Omega} \mb{u}\cdot \Div \bs{\tau}_0 \dx }{\norm{\bs{\tau}_0}_{\Hdiv}} - \frac{\int_{\Omega} \hat{\mb{u}} : \bs{\tau_0} \dx}{\norm{\bs{\tau}_0}_{\Hdiv}} \nonumber \\
	&\geq \sup_{\bs{\tau} \in \Hdiv } \frac{-\int_{\Omega} \mb{u}\cdot \Div \bs{\tau} \dx }{\norm{\bs{\tau}}_{\Hdiv}} - \frac{\int_{\Omega} \hat{\mb{u}} : \bs{\tau} \dx}{\norm{\bs{\tau}}_{\Hdiv}}. 
	\end{align}
We define $\mb{z} \in \mb{W}_{0}^{1,p}(\Omega)$ as the unique solution of the weak formulation (see \cite[Lemma 3.5]{gatica2021}, also  \cite{ciarlet1978})
	\begin{align*}
	\int_{\Omega} \abs{\Eps \mb{z}}^{p-2} \Eps \mb{z} : \Eps \mb{v} \dx = -\int_\Omega \abs{\mb{u}}^{p-2}\mb{u} \cdot \mb{v}, \quad \forall \mb{v} \in \mb{W}_{0}^{1,p}(\Omega).
	\end{align*}
	Then owing to Korn's inequality (see \cite[Lemma 3.3]{Jouvet}), $\norm{\mb{z}}_{\mb{W}^{1,p}(\Omega)} \leq \norm{\mb{u}}_{\mb{L}^{p}(\Omega)} $. We set $\bs{\tau} = \tilde{\bs{\tau}} \coloneqq \abs{\Eps \mb{z}}^{p-2} \Eps \mb{z}$, hence $\Div(\tilde{\bs{\tau}}) = \abs{\mb{u}}^{p-2}\mb{u}$ and $\norm{\tilde{\bs{\tau}}}_{\Hdiv} \leq C \norm {\mb{u}}_{\mb{L}^{p}(\Omega)}^{p-1}$. In this way, noting that $\tilde{\bs{\tau}}$ is symmetric, from \eqref{eq:c_part1} we find that
	\begin{align} \label{eq:sup_u}
	\sup_{(\bs{\tau},\phi)\in \Hdiv \times L^{p}(\Omega)} \frac{c((\bs{\tau}, \phi),(\mb{u},\hat{\mb{u}},\lambda))}{\norm{(\bs{\tau},\phi)}_{ \Hdiv \times L^{p}(\Omega)}} &\geq    \frac{ \norm{\mb{u}}_{\mb{L}^p(\Omega)}^p}{\norm{\tilde{\bs{\tau}}}_{\Hdiv}} \geq C \norm{\mb{u}}_{\mb{L}^p(\Omega)}.
	\end{align}
	On the other hand, we consider $\mb{w} \in \mb{W}_{0}^{1,p}(\Omega)$ as the unique solution of the following variational equation
	\begin{align*}
	\int_{\Omega} \abs{\Eps \mb{w}}^{p-2} \Eps \mb{w} : \Eps \mb{v} \dx = -\int_\Omega \abs{\hat{\mb{u}}}^{p-2}\hat{\mb{u}} : \Eps \mb{v}, \quad \forall \mb{v} \in \mb{W}_{0}^{1,p}(\Omega).
	\end{align*}
	Similarly, as for the previous case, we set $\tilde{\bs{\tau}} = \abs{\Eps \mb{w}}^{p-2} \Eps \mb{w}$ , and by employing again the Poincaré and Korn inequalities, we have that $\norm{\tilde{\bs{\tau}}}_{\mathbb{L}^{p'}(\Omega)} \leq \norm {\hat{\mb{u}}}_{\mathbb{L}^p(\Omega)}^{p-1}$. Thus, we define $\hat{\bs{\tau}} = \tilde{\bs{\tau}} + \abs{\hat{\mb{u}}}^{p-2}\hat{\mb{u}}$, which we realize is divergence free, since $\hat{\bs{\tau}}:\hat{\mb{u}} = \abs{\hat{\mb{u}}}^{p-2}(\hat{\mb{u}}:\hat{\mb{u}})$, from \eqref{eq:c_part1} we deduce that,
		\begin{align} \label{eq:sup_hatu}
	\sup_{(\bs{\tau},\phi)\in \Hdiv \times L^{p}(\Omega)} \frac{c((\bs{\tau}, \phi),(\mb{u},\hat{\mb{u}},\lambda))}{\norm{(\bs{\tau},\phi)}_{ \Hdiv \times L^{p}(\Omega)}} &\geq    \frac{ \norm{\hat{\mb{u}}}_{\mb{L}^p(\Omega)}^p}{\norm{\hat{\bs{\tau}}}_{\Hdiv}} \geq C \norm{\hat{\mb{u}}}_{\mathbb{L}^p(\Omega)}.
	\end{align}
Finally, the result is a consequence of \eqref{eq:sup_lambd}, \eqref{eq:sup_u} and \eqref{eq:sup_hatu}.
\end{proof}

We are now in position to provide our main result. In order to conclude the unique solvability of \eqref{eq:var_mod}, we will employ that $\mb{A}_{\gamma}$ is strongly monotone and hemicontinuous, and that $b$ and $c$ satisfy appropriate inf-sup conditions.

\begin{thm} \label{thm:wellp_var_mod}
	Let $\Omega \subset \mathbb{R}^d$, then problem \eqref{eq:var_mod} has a unique solution $(\bs{\theta}, \bs{\sigma},\phi, \mb{u}, \hat{\mb{u}}, \lambda) \in \mathbb{L}^{p}(\Omega)\times \Hdiv\times L^{p'}(\Omega) \times \mb{L}^{p}(\Omega) \times \Lskew \times \mathbb{R}$.
\end{thm}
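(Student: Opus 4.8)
The plan is to treat \eqref{eq:saddle-point-problem} as a nested pair of saddle points and to solve it by \emph{peeling off the outer block first}, using the surjectivity encoded in Lemma~\ref{lem:inf-sup-c}, and then applying the nonlinear (monotone) Babuška--Brezzi theory in reflexive Banach spaces to the inner block, whose hypotheses are exactly Lemmas~\ref{lem:opAmon} and~\ref{lem:infsup-b}. As a preliminary step I would record that both right-hand sides are bounded linear functionals: $\mb{v}\mapsto\int_\Omega\mb{f}\cdot\mb{v}\dx$ is bounded on $\mb{L}^{p}(\Omega)$ since $\mb{f}\in\mb{L}^{p'}(\Omega)$, and $(\bs{\tau},\psi)\mapsto-\int(\bs{\tau}\cdot\bs{n})\cdot\mb{u}^D\ds$ is bounded on $\Hdiv\times L^{p'}(\Omega)$ by continuity of the normal-trace map on $\Hdiv$ together with the compatibility/regularity of $\mb{u}^D$. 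All the spaces involved are reflexive for $1<p<\infty$, which is what Browder--Minty will require.

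For the reduction, Lemma~\ref{lem:inf-sup-c} states that the operator induced by $c(\cdot,\cdot)$ is bounded below, hence surjective onto $[\mb{L}^{p}(\Omega)\times\Lskew\times\mathbb{R}]^{*}$; thus the third equation of \eqref{eq:saddle-point-problem} has a particular solution $(\bs{\sigma}_0,\phi_0)$, and I would write $(\bs{\sigma},\phi)=(\bs{\sigma}_0,\phi_0)+(\bs{\sigma}_\ast,\phi_\ast)$ with $(\bs{\sigma}_\ast,\phi_\ast)\in\mb{K}_1$. Restricting the second equation to test functions $(\bs{\tau},\psi)\in\mb{K}_1=\ker c$ annihilates the $c^{t}$-term, so \eqref{eq:spp_1}--\eqref{eq:spp_2} collapse to the one-fold nonlinear saddle point problem: find $(\bs{\theta},(\bs{\sigma}_\ast,\phi_\ast))\in\mathbb{L}^{p}(\Omega)\times\mb{K}_1$ with
\begin{align*}
a_{\gamma}(\bs{\theta},\bs{\xi})+b(\bs{\xi},(\bs{\sigma}_\ast,\phi_\ast)) &= -\,b(\bs{\xi},(\bs{\sigma}_0,\phi_0)), \qquad \forall\,\bs{\xi}\in\mathbb{L}^{p}(\Omega), \\
b(\bs{\theta},(\bs{\tau},\psi)) &= -\int(\bs{\tau}\cdot\bs{n})\cdot\mb{u}^D\ds, \qquad \forall\,(\bs{\tau},\psi)\in\mb{K}_1.
\end{align*}

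The inner problem I would solve by the standard device: let $\mb{V}\subset\mathbb{L}^{p}(\Omega)$ be the kernel of $b(\cdot,\cdot)$ over $\mb{K}_1$. The single inf-sup estimate of Lemma~\ref{lem:infsup-b} simultaneously lifts the second constraint to a particular $\bs{\theta}_0$ (surjectivity of $b$) and, later, recovers $(\bs{\sigma}_\ast,\phi_\ast)$ uniquely (injectivity with closed range of its transpose). On the closed subspace $\mb{V}$ the shifted operator $\bs{\eta}\mapsto\mathbf{A}_{\gamma}(\bs{\eta}+\bs{\theta}_0)$ is hemicontinuous, strictly monotone and coercive by Lemma~\ref{lem:opAmon} (the coercivity on all of $\mathbb{L}^{p}(\Omega)$ restricts to $\mb{V}$, and the shift by the fixed $\bs{\theta}_0$ only perturbs it by lower-order terms in $\norm{\bs{\eta}}$), so Browder--Minty yields a unique $\bs{\theta}\in\bs{\theta}_0+\mb{V}$ solving the reduced equation on $\mb{V}$; the inf-sup of $b$ then returns the unique $(\bs{\sigma}_\ast,\phi_\ast)\in\mb{K}_1$ from the first line. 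This determines $(\bs{\theta},\bs{\sigma},\phi)$ uniquely.

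It remains to recover the outer multipliers. With $\bs{\theta}$ fixed, the functional $(\bs{\tau},\psi)\mapsto-\int(\bs{\tau}\cdot\bs{n})\cdot\mb{u}^D\ds-b(\bs{\theta},(\bs{\tau},\psi))$ vanishes on $\mb{K}_1$ by construction, hence lies in the range of $c^{t}$; the inf-sup of Lemma~\ref{lem:inf-sup-c} then produces a unique $(\mb{u},\hat{\mb{u}},\lambda)\in\mb{L}^{p}(\Omega)\times\Lskew\times\mathbb{R}$ satisfying the full second equation, and global uniqueness of the six-tuple follows by combining the strict monotonicity of $\mathbf{A}_{\gamma}$ with the two inf-sup conditions. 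I expect the main obstacle to be the inner step rather than the bookkeeping of the reduction: because the natural pairing here is the non-Hilbertian $\mathbb{L}^{p}$--$\mathbb{L}^{p'}$ duality rather than an energy inner product, the saddle point cannot be closed by a Lax--Milgram/bilinear Brezzi argument, and one must carefully splice Browder--Minty surjectivity for the monotone part with the Banach inf-sup for the constraint, keeping the duality pairings and the reflexivity hypotheses consistent across the whole reduction.
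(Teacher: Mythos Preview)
Your proposal is correct and follows essentially the same route as the paper's proof: peel off the outer constraint via Lemma~\ref{lem:inf-sup-c} to obtain a particular $(\bs{\sigma}_0,\phi_0)$, reduce to a one-fold nonlinear saddle point on $\mathbb{L}^{p}(\Omega)\times\mb{K}_1$, lift the $b$-constraint with Lemma~\ref{lem:infsup-b} to get $\bs{\theta}_0$, apply Browder--Minty on the kernel $\mb{V}$ (the paper calls it $\mb{K}_2$), and then successively recover $(\bs{\sigma}_\ast,\phi_\ast)$ and $(\mb{u},\hat{\mb{u}},\lambda)$ from the two inf-sup conditions. If anything, your treatment of the inner step is slightly more careful than the paper's: you explicitly work with the \emph{shifted} operator $\bs{\eta}\mapsto\mathbf{A}_\gamma(\bs{\eta}+\bs{\theta}_0)$ and note why monotonicity, hemicontinuity and coercivity survive the translation, whereas the paper's reduced equation \eqref{eq:redprob2} is written as $a_\gamma(\tilde{\bs{\theta}},\bs{\xi})=-b^t(\bs{\xi},(\bs{\sigma}_0,\phi_0))$ without displaying the shift, even though $a_\gamma$ is nonlinear.
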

\begin{proof}	
	Proceeding similarly as in the proof of \cite[Theorem 3.1]{ervin2008}, by Lemma \ref{lem:inf-sup-c}, we know that operator $\bs{C}:  \Hdiv \times L^{p}(\Omega) \to (\mb{L}^{p}(\Omega)\times \Lskew \times \mathbb{R})^*$ defined as $\angles{C(\bs{\sigma},\phi),(\mb{v},\hat{\mb{v}},\eta)} \coloneqq c((\bs{\sigma}, \phi),(\mb{v},\hat{\mb{v}},\eta))$ is an isomorphism from $(\Hdiv \times L^{p'}(\Omega)) / \mb{K_1}$ into $(\mb{L}^{p}(\Omega)\times \Lskew \times \mathbb{R})^*$. Then, there exists $(\bs{\sigma}_0, \phi_0) \in (\Hdiv \times L^p(\Omega))/\mb{K_1}$, such that
	\begin{align*}
	c((\bs{\sigma}_0, \phi_0),(\mb{v}, \hat{\mb{v}},\eta)) &= \int_{\Omega} \mb{f}\cdot \mb{v}, \,\,\mbox{for all $(\mb{v}, \hat{\mb{v}},\eta) \in  \mb{L}^{p}(\Omega)\times \Lskew \times \mathbb{R}$},
	\end{align*}
	and, 
	\begin{align*}
\norm{(\bs{\sigma}_0,\phi_0)}_{(\Hdiv \times L^p(\Omega))/\mb{K_1}} \leq (1/C_c) \norm{f}_{\mb{L}^{p'}(\Omega)}.
	\end{align*}
	Let us define the operator $\bs{B}:  \mathbb{L}^p(\Omega) \to (\Hdiv \times L^{p'}(\Omega))^*$ as $\angles{B(\bs{\theta}),(\bs{\tau},\psi)} \coloneqq b(\bs{\theta},(\bs{\tau},\psi))$, $\bs{\sigma} = \tilde{\bs{\sigma}} + \bs{\sigma}_0$ and $\phi = \tilde{\phi} + \phi_0$. Then solving problem \eqref{eq:saddle-point-problem} is equivalent to find $(\bs{\theta},(\tilde{\bs{\sigma}},\tilde{\phi}))\in \mathbb{L}^{p}(\Omega)  \times \mb{K}_2$ such that
\begin{subequations} \label{eq:redprob1}
\begin{equation}
\begin{array}{rcll}
a_{\gamma}(\bs{\theta},\bs{\xi}) + b^t(\bs{\xi},(\tilde{\bs{\sigma}},\tilde{\phi})) &=& -b^t(\bs{\xi},(\bs{\sigma}_0,\phi_0)), &\text{for all } \bs{\xi} \in \mathbb{L}^{p}(\Omega), \\
b(\bs{\theta},(\bs{\tau},\psi)) &=&\cblue{-\int (\bs{\tau}\cdot\bs{n})\cdot\mb{u}^D\ds}, &\text{for all }(\bs{\tau}, \psi) \in \mb{K}_1,
\end{array}
\end{equation}
\end{subequations}
	where $\mb{K}_2 \subset \mathbb{L}^p(\Omega)$ is defined as
	\begin{align*}
	\mb{K_2} &\coloneqq \left\{ \bs{\theta} \in \mathbb{L}^{p}(\Omega) \,:\, b(\bs{\theta},(\bs{\tau},\psi)) = 0, \,\, \forall (\bs{\tau},\psi)\in\mb{K}_1  \right\}.
	\end{align*}
		
	As in the previous case the inf-sup condition given in Lemma \ref{lem:infsup-b} and \cblue{$\mb{u}^D\in \mb{W}^{p',p}(\Gamma)$}, allows us to find $\bs{\theta_0}$ such that
	\begin{align*}
	b(\bs{\theta}_0,(\bs{\tau},\psi)) &=\cblue{-\int (\bs{\tau}\cdot\bs{n})\cdot\mb{u}^D\ds}, \,\,\mbox{for all $(\bs{\tau}, \psi) \in \mb{K}_1$},
	\end{align*}
	 and reduce problem \cblue{\eqref{eq:redprob1}} to the equivalent form: find $\tilde{\bs{\theta}} \in \mb{K_2}$ such that
	\begin{align} \label{eq:redprob2}
	a_{\gamma}(\tilde{\bs{\theta}},\bs{\xi}) &= -b^t(\bs{\xi},(\bs{\sigma}_0,\phi_0)), \,\,\mbox{for all $\bs{\xi} \in \mb{K_2}$}.
	\end{align}
	Now, the result in Lemma \ref{lem:opAmon} \cblue{allows} us to apply the Minty-Browder method of strictly monotone operators (see \cite[Sec. 2.3.3]{JClibro}) to state the existence of a unique $\tilde{\bs{\theta}}$ solving problem \eqref{eq:redprob2}, and this uniquely \cblue{determines} $\bs{\theta} = \tilde{\bs{\theta}} + \bs{\theta}_0$. Lemma \ref{lem:infsup-b} and equation \eqref{eq:spp_1} implies the existence of a unique $(\tilde{\bs{\sigma}},\tilde{\phi}) \in \mb{K}_1$ satisfying:
	\begin{align*}
	b^t(\bs{\xi},(\tilde{\bs{\sigma}},\tilde{\phi})) &= -b^t(\bs{\xi},(\bs{\sigma}_0,\phi_0)) - a_{\gamma}(\tilde{\bs{\theta}},\bs{\xi}), \,\,\mbox{for all $\bs{\xi} \in \mathbb{L}^{p}(\Omega)$},
	\end{align*}
	which in turn uniquely determines $(\bs{\sigma},\phi)$. Finally, Lemma \ref{lem:inf-sup-c} and equation \eqref{eq:spp_2} imply the existence of a unique $(\mb{u},\hat{\mb{u}},\gamma) \in \mb{L}^{p}(\Omega) \times \Lskew \times \mathbb{R}$ such that 
	\begin{align*}
	c^t((\bs{\tau},\psi),(\mb{u}, \hat{\mb{u}},\lambda)) &= -b(\bs{\theta},(\bs{\tau},\psi)), \,\,\mbox{for all $(\bs{\tau}, \psi) \in \Hdiv \times L^{p'}(\Omega)$}.
	\end{align*}
\end{proof}

An alternative characterization of the solution using a multiplier approach can be obtained if we introduce on \eqref{eq:var_mod} an auxiliary tensor $\mathbf{q}_\gamma$, such that
\[
|\bs{\theta}|_\gamma \mathbf{q}_\gamma = \gamma\tau_s \bs{\theta}.
\]
The strategy is a particularly efficient numerical technique for solving viscoplastic
flow problems in which the non-linearity is related to the unknown velocity gradient \cite{JC2012}. With such a characterization, system \eqref{eq:var_mod} can be equivalently formulated as
\begin{subequations} \label{eq:var_mod_q}
\begin{equation}
\begin{array}{lcl}
-\int_{\Omega} \bs{\theta} \,:\,\bs{\tau} \dx - \int_{\Omega} \psi \tr(\bs{\theta}) \dx - \int_{\Omega} \mb{u} \cdot \Div \bs{\tau}- \int_{\Omega} \hat{\mb{u}} : \bs{\tau} \dx \dx\vspace{0.2cm}
\\ \hspace{2cm}+ \lambda \int_{\Omega} \tr(\bs{\tau}) \dx = \cblue{-\int (\bs{\tau}\cdot\bs{n})\cdot\mb{u}^D\ds}, \,\,\,\,\mbox{for all $(\bs{\tau},\psi) \in \Hdiv \times L^{p'}(\Omega)$}, 
\end{array}
\end{equation}
\begin{equation}
\begin{array}{rcl}
\int_{\Omega} \nu(|\bs{\theta}|)\bs{\theta} : \bs{\xi} \dx + \int_{\Omega} \mathbf{q}_{\gamma} : \bs{\xi} \dx\ - \int_{\Omega} \bs{\sigma} : \bs{\xi}\vspace{0.2cm}\\\hspace{4.cm}- \int_{\Omega} \phi \tr(\bs{\xi}) \dx =0, &\mbox{for all $\bs{\xi} \in \mathbb{L}^{p}(\Omega)$},
\end{array}
\end{equation}
\begin{equation}
\begin{array}{rcl}
-\int_{\Omega} \mb{v} \cdot \Div \bs{\sigma} \dx - \int_{\Omega} \hat{\mb{v}}: \bs{\sigma} \dx + \eta \int_{\Omega} \tr(\bs{\sigma}) \,dx \vspace{0.2cm}\\\hspace{0.cm}= \int_{\Omega} \mb{v} \cdot \mb{f} \dx, &\mbox{for all  $(\mb{v},\hat{\mb{v}},\eta) \in  \mb{L}^{p}(\Omega)\times \Lskew \times \mathbb{R}$},
\end{array}
\end{equation}
\begin{equation}
\begin{array}{rcl}
\int_{\Omega} (\gamma \tau_s \bs{\theta} - |\bs{\theta}|_{\gamma} \mb{q}) : \mb{w} \dx  =0, &\mbox{for all $\mb{w} \in \mathbb{L}^{\infty}(\Omega)$}. \label{eq:vmq_q}
\end{array}
\end{equation}
\end{subequations}
The existence of multiplier $\cblue{\mathbf{q}_\gamma}$ is discussed in \cite{JC2012} and references therein. In the following proposition, we proceed to prove the equivalence between the two formulations, and in consequence the existence of a unique solution for problem \eqref{eq:var_mod_q}.

\begin{prop} \label{prop:existence_q}
	Given $\gamma>0$, problem \eqref{eq:var_mod_q}  has a unique solution $(\bs{\theta}, \bs{\sigma},\phi, \mb{u}, \hat{\mb{u}},\mb{q}_{\gamma}, \lambda) \in \mathbb{L}^{p}(\Omega)\times \Hdiv\times L^{p'}(\Omega) \times \mb{L}^{p}(\Omega)\times \Lskew \times \mathbb{L}^{p'}(\Omega)\times \mathbb{R}$.
\end{prop}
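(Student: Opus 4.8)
The plan is to obtain the proposition as a direct consequence of Theorem~\ref{thm:wellp_var_mod} by showing that \eqref{eq:var_mod} and \eqref{eq:var_mod_q} are equivalent formulations, and then transferring existence and uniqueness. The guiding observation is that the auxiliary tensor is forced by its defining relation $|\bs{\theta}|_\gamma\mathbf{q}_\gamma=\gamma\tau_s\bs{\theta}$ to equal $\mathbf{q}_\gamma=\gamma\tau_s\frac{\bs{\theta}}{|\bs{\theta}|_\gamma}$, and this expression is not merely well defined but uniformly bounded: since $|\bs{\theta}|_\gamma=\max(\tau_s,\gamma|\bs{\theta}|)\geq\tau_s>0$, a short case analysis gives $|\mathbf{q}_\gamma|=\gamma\tau_s\frac{|\bs{\theta}|}{|\bs{\theta}|_\gamma}\leq\tau_s$ a.e., so that $\mathbf{q}_\gamma\in\mathbb{L}^{\infty}(\Omega)\subset\mathbb{L}^{p'}(\Omega)$ on the bounded domain $\Omega$. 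This secures the correct functional setting for the new unknown at no extra cost.

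First I would treat the forward implication. Let $(\bs{\theta},\bs{\sigma},\phi,\mb{u},\hat{\mb{u}},\lambda)$ be the unique solution of \eqref{eq:var_mod} furnished by Theorem~\ref{thm:wellp_var_mod}, and set $\mathbf{q}_\gamma:=\gamma\tau_s\frac{\bs{\theta}}{|\bs{\theta}|_\gamma}$. The first and third equations of \eqref{eq:var_mod_q} coincide verbatim with those of \eqref{eq:var_mod}. For the second equation it suffices to note that $\int_\Omega \mathbf{q}_\gamma:\bs{\xi}\dx=\gamma\tau_s\int_\Omega \frac{1}{|\bs{\theta}|_\gamma}(\bs{\theta}:\bs{\xi})\dx$, which is exactly the regularization term appearing in $a_\gamma$; hence the second line of \eqref{eq:var_mod_q} reproduces that of \eqref{eq:var_mod}. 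Finally \eqref{eq:vmq_q} holds trivially, because $\gamma\tau_s\bs{\theta}-|\bs{\theta}|_\gamma\mathbf{q}_\gamma\equiv 0$ by construction. Thus the extended tuple solves \eqref{eq:var_mod_q}.

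Next I would address the reverse implication, which is where the only real work lies. Suppose $(\bs{\theta},\bs{\sigma},\phi,\mb{u},\hat{\mb{u}},\mathbf{q}_\gamma,\lambda)$ solves \eqref{eq:var_mod_q}. The key step is to recover the pointwise defining relation from the weak equation \eqref{eq:vmq_q}. The integrand $\gamma\tau_s\bs{\theta}-|\bs{\theta}|_\gamma\mathbf{q}_\gamma$ belongs to $\mathbb{L}^1(\Omega)$: indeed $\gamma\tau_s\bs{\theta}\in\mathbb{L}^p(\Omega)\subset\mathbb{L}^1(\Omega)$, while $|\bs{\theta}|_\gamma\lesssim 1+|\bs{\theta}|\in\mathbb{L}^p(\Omega)$ and $\mathbf{q}_\gamma\in\mathbb{L}^{p'}(\Omega)$ give $|\bs{\theta}|_\gamma\mathbf{q}_\gamma\in\mathbb{L}^1(\Omega)$ by Hölder. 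Consequently the validity of \eqref{eq:vmq_q} for every $\mb{w}\in\mathbb{L}^{\infty}(\Omega)$, together with the $\mathbb{L}^1$--$\mathbb{L}^\infty$ duality (the fundamental lemma of the calculus of variations), yields $\gamma\tau_s\bs{\theta}=|\bs{\theta}|_\gamma\mathbf{q}_\gamma$ a.e. in $\Omega$. Since $|\bs{\theta}|_\gamma\geq\tau_s>0$, this identifies $\mathbf{q}_\gamma=\gamma\tau_s\frac{\bs{\theta}}{|\bs{\theta}|_\gamma}$ a.e., and substituting it back into the second equation of \eqref{eq:var_mod_q} recovers the second line of \eqref{eq:var_mod}. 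The remaining equations already match, so $(\bs{\theta},\bs{\sigma},\phi,\mb{u},\hat{\mb{u}},\lambda)$ solves \eqref{eq:var_mod}.

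Having established the equivalence in both directions, existence and uniqueness transfer immediately. By Theorem~\ref{thm:wellp_var_mod} problem \eqref{eq:var_mod} has exactly one solution; the forward implication produces from it a solution of \eqref{eq:var_mod_q}, while the reverse implication shows that every solution of \eqref{eq:var_mod_q} restricts to a solution of \eqref{eq:var_mod} with $\mathbf{q}_\gamma$ completely determined by $\bs{\theta}$. Two solutions of \eqref{eq:var_mod_q} therefore share the same $(\bs{\theta},\bs{\sigma},\phi,\mb{u},\hat{\mb{u}},\lambda)$, and hence the same $\mathbf{q}_\gamma$, which gives uniqueness. I expect the main obstacle to be the careful justification of the passage from the weak form \eqref{eq:vmq_q} to the almost-everywhere identity, in particular verifying the integrability needed to invoke the duality argument, rather than any delicate estimate, since the boundedness $|\mathbf{q}_\gamma|\leq\tau_s$ renders the functional framework essentially free.
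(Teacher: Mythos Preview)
Your proposal is correct and follows essentially the same route as the paper: establish the equivalence of \eqref{eq:var_mod} and \eqref{eq:var_mod_q} by defining $\mathbf{q}_\gamma=\gamma\tau_s\,\bs{\theta}/|\bs{\theta}|_\gamma$ in the forward direction and recovering this identity from \eqref{eq:vmq_q} via $\mathbb{L}^1$--$\mathbb{L}^\infty$ duality in the reverse direction, then invoke Theorem~\ref{thm:wellp_var_mod}. If anything, your justification of the integrability needed for the duality step and the sharp bound $|\mathbf{q}_\gamma|\leq\tau_s$ are spelled out more carefully than in the paper's version.
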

\begin{proof}
	The proof is based on the equivalence between \eqref{eq:var_mod} and \eqref{eq:var_mod_q}. Let us suppose that $\{\bs{\theta}, \bs{\sigma},\phi, \mb{u}, \hat{\mb{u}},\lambda\}\in \mathbb{L}^{p}(\Omega)\times \Hdiv\times L^{p'}(\Omega) \times \mb{L}^{p}(\Omega)\times \Lskew \times \mathbb{R}$ is a solution for \eqref{eq:var_mod} and define
	\[
	\mathbf{q}:=\gamma \tau_s \frac{\bs{\theta}}{|\bs{\theta}|_\gamma}.
	\]
	First, note that, since $\bs{\theta} \in \mathbb{L}^{p}(\Omega)$, $\mathbf{q}$ is a matrix of measurable functions. Further, since $|\frac{\bs{\theta}}{|\bs{\theta}|_\gamma}|\leq \gamma$ a.e. in $\Omega$, we conclude that $\mathbf{q}\in \mathbb{L}^{\infty}(\Omega)$ and that $\mathbf{q}|\bs{\theta}|_\gamma^{-1}=\gamma \tau_s\bs{\theta}$ and $\mathbf{q}=\gamma\tau_s \frac{\bs{\theta}}{|\bs{\theta}|_\gamma}$ hold in $\mathbb{L}^{p'}(\Omega)$.
	Thus, with this definition for $\mathbf{q}$, we have that
	\[
	a_\gamma(\bs{\theta},\bs{\psi})= a(\bs{\theta},\bs{\psi}) + (\mathbf{q}\,,\,\bs{\psi})_{p'} = a(\bs{\theta},\bs{\psi}) + \gamma \tau_s \int_\Omega \frac{\bs{\theta}\,:\,\bs{\psi}}{|\bs{\theta}|_\gamma}\, dx,
	\]
	which implies that $\{\bs{\theta}, \bs{\sigma},\phi, \mb{u}, \hat{\mb{u}}, \lambda\}$ satisfies \eqref{eq:var_mod} for any $\{\bs{\tau}, \bs{\xi},\psi, \mb{v},\hat{\mb{v}}, \eta\}\in \mathbb{L}^{p}(\Omega)\times \Hdiv\times L^{p'}(\Omega) \times \mb{L}^{p}(\Omega) \times \Lskew \times \mathbb{R}$. Consequently, the existence of a solution for \eqref{eq:var_mod_q} follows from Theorem \ref{thm:wellp_var_mod}. 
	
	Reciprocally, let us assume that $\{\bs{\theta}, \bs{\sigma},\phi, \mb{u},\hat{\mb{u}}, \mb{q}, \lambda\}$ solve \eqref{eq:var_mod_q}. From \eqref{eq:vmq_q}, we have that
	\[
	\int_\Omega \left(\gamma\tau_s \bs{\theta} - |\bs{\theta}|_\gamma \mathbf{q}\right)\,:\,\mathbf{w} \, dx =0,\,\,\forall \mathbf{w}\in \mathbb{L}^{\infty}(\Omega).
	\]
	This expression implies that $\gamma\tau_s \bs{\theta} - |\bs{\theta}|_\gamma \mathbf{q}=0$ in $\mathbb{L}^{1}(\Omega)$, and for any solution of \eqref{eq:var_mod_q}, we have that
	\[
	\mathbf{q}= \gamma\tau_s \frac{\bs{\theta}}{|\bs{\theta}|_\gamma},\,\,\mbox{ a.e. in $\Omega$}.
	\]
	Using this expression in \eqref{eq:var_mod_q}, with $\mathbf{w}=0$, yields that $\{\bs{\theta}, \bs{\sigma},\phi, \mb{u}, \hat{\mb{u}},\lambda\}$ solve \eqref{eq:var_mod}, so the uniqueness follows from Theorem \ref{thm:wellp_var_mod} and the given definition of $\mathbf{q}$.
\end{proof}

\section[Discretization and Linearization]{Discretization and Linearization} \label{sec:numerical}
In this section, we discuss the FEM discretization of \eqref{eq:var_mod_q} and propose a linearization based on the semismooth Newton methods.

\subsection{FEM Discretization}
Let us assume that $\Omega\subset\Real^d$ is a convex and \cblue{polyhedral} domain, and $\mathcal{T}_h$ is a regular mesh of $\overline{\Omega}$, parametrized by $h:=\max_{T\in \mathcal{T}_h}\textrm{diag}\,T$. We look for finite dimensional spaces $\mb{T}_h \subset \mathbb{L}^p(\Omega)$, $\mb{T}_{h,\mathrm{div}} \subset \Hdiv$, $\mb{T}_{h,\mathrm{sk}} \subset \Lskew$, $Q_h\subset L_0^{p'}(\Omega)$, $\mathbf{V}_h\subset \mathbf{W}_0^{1,p}(\Omega)$,  and $\mathbf{W}_h\subset \mathbb{L}^{p'}(\Omega)$, which allow us to propose the following FEM approximation of \eqref{eq:var_mod_q}: find $\bs{\theta}_h \in \mb{T_h}$, $\bs{\sigma}_h \in \bs{T}_{h,\mathrm{div}}$, $\mathbf{u}_h\in \mathbf{V}_h$, $\hat{\mb{u}} \in  \bs{T}_{h,\mathrm{sk}}$, $\phi_h\in Q_h$, $\lambda_h \in \mathbb{R}$ and $\mathbf{q}_h\in \mathbf{W}_h$ such that
\begin{subequations} \label{eq:varmdis}
	\begin{align}
	-\int_{\Omega} \bs{\theta}_h \,:\,\bs{\tau}_h \dx - \int_{\Omega} \psi_h \tr(\bs{\theta}_h) \dx - \int_{\Omega} \mb{u}_h \cdot \Div \bs{\tau}_h \dx
	\nonumber \\  - \int_{\Omega} \hat{\mb{u}}_h : \bs{\tau}_h \dx + \lambda_h \int_{\Omega} \tr(\bs{\tau}_h) \dx &= \cblue{-\int (\bs{\tau_h}\cdot\bs{n})\cdot\mb{u}^D\ds}\\
	\int_{\Omega} \nu(|\bs{\theta}_h|)\bs{\theta}_h : \bs{\xi}_h \dx + \int_{\Omega} \mathbf{q}_{h} : \bs{\xi}_h \dx - \int_{\Omega} \bs{\sigma}_h : \bs{\xi}_h \nonumber \\
	- \int_{\Omega} \phi_h \tr(\bs{\xi}_h) \dx &=0 \\
	-\int_{\Omega} \mb{v}_h \cdot \Div \bs{\sigma}_h \dx - \int_{\Omega} \hat{\mb{v}}_h: \bs{\sigma}_h \dx \\+ \eta_h \int_{\Omega} \tr(\bs{\sigma}_h) \dx &= \int_{\Omega} \mb{v}_h \cdot \mb{f} \dx \\
	 \int_{\Omega} (\gamma \tau_s \bs{\theta}_h - |\bs{\theta}_h|_{\gamma} \mb{q}_h) : \mb{w}_h \dx & =0 ,
	\end{align}
\end{subequations}

for all $(\bs{\tau}_h,\psi_h) \in (\mb{T}_{h,\mathrm{div}}\times Q_h)$, $\bs{\xi}_h \in \mb{T}_h$,  $(\mb{v}_h,\hat{\mb{v}}_h,\eta_h) \in  \mb{V}_h\times \mb{T}_{h,\mathrm{sk}}\times \mathbb{R}$ and $\mb{w}_h \in \mb{W}_h$.

\subsection{Well-posedness of the Galerkin scheme}\label{sec:galerkinwp}

It is clear that no restrictions have to be added to $\mb{T}_h$, $\mb{T}_{h,\mathrm{div}}$,$\mb{T}_{h,\mathrm{sk}}$, $Q_h$, $\mb{V}_h$ and $\mb{W}_h$ other than being finite dimensional subspaces of the described spaces. However, for ellipticity purposes of the operator induced by $b$ in $\mb{K}_1$ and the operator induced by $c$, the following inf-sup conditions must be met:

\begin{enumerate}[label={(\textbf{H.\arabic*})}]
	\item \label{hd1}There exists a constant $\hat{C}_b$, independent of $h$ such that
	\begin{align*}
\inf_{(\bs{\tau}_h,\psi_h) \in \mb{K}_{1,h}}  \, \sup_{\bs{\theta}_h \in \mb{T}_h} \frac{b(\bs{\theta}_h,(\bs{\tau}_h,\psi_h))}{\norm{\bs{\theta}_h}_{\mathbb{L}^{p}(\Omega)} \norm{(\bs{\tau}_h,\psi_h)}_{ \Hdiv\times L^{p'}(\Omega)}} &\geq \hat{C}_b.
\end{align*}	
\item \label{hd2} Let $\mb{\mathcal{V}}_h \coloneqq \mb{V}_h\times \mb{T}_{h,\mathrm{sk}}\times \mathbb{R}$ and $\mb{\mathcal{Q}}_h \coloneqq \mb{T}_{h,\mathrm{div}} \times Q_h$. There exists a constant $\hat{C}_c$, independent of $h$ such that
\begin{align*}
\inf_{(\mb{v}_h,\hat{\mb{v}}_h,\eta_h) \in \mb{\mathcal{V}}_h }  \, \sup_{(\bs{\sigma}_h,\phi_h)\in \mb{\mathcal{Q}}_h} \frac{c((\bs{\sigma}_h, \phi_h),(\mb{v}_h,\hat{\mb{v}}_h,\eta_h))}{\norm{(\bs{\sigma}_h,\phi_h)}_{ \Hdiv \times L^{p}(\Omega)} \norm{(\mb{v}_h,\hat{\mb{v}}_h,\eta_h)}_{\mb{L}^{p}(\Omega)\times \Lskew \times \mathbb{R}}} &\geq \hat{C}_c,
\end{align*}
\end{enumerate}
where the discrete kernels of $B$ and $C$ are given by:
\begin{align*}
\mb{K}_{1,h} &\coloneqq \left\{ (\bs{\sigma}_h, \phi_h)\in \mb{T}_{h,\mathrm{div}} \times Q_h \,:\, c((\bs{\sigma}_h, \phi_h),(\mb{v}_h,\hat{\mb{v}}_h, \eta_h)) = 0 \right\}, \\
\mb{K}_{2,h} &\coloneqq \left\{ \bs{\theta}_h \in \mb{T}_h \,:\, b(\bs{\theta}_h,(\bs{\tau}_h,\psi_h)) = 0 \quad \forall (\bs{\tau}_h,\psi_h)\in\mb{K}_{1,h}  \right\}.
\end{align*}

\begin{prop}
Assume \ref{hd1} and \ref{hd2}. Then problem \eqref{eq:varmdis} has a unique solution \\ $(\bs{\theta}_h, \bs{\sigma}_h, \mathbf{u}_h,\hat{\mb{u}}_h,\phi_h,\lambda_h,\mathbf{q}_h)$ $\in \mb{T}_h\times\mb{T}_{h,\mathrm{div}}\times\mathbf{V}_h\times\mb{T}_{h,\mathrm{sk}}\times Q_h\times \mathbb{R}\times\mathbf{W}_h$.
\end{prop}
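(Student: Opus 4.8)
The plan is to mirror the continuous analysis of Theorem~\ref{thm:wellp_var_mod} and Proposition~\ref{prop:existence_q}, now replacing the continuous inf-sup conditions of Lemmas~\ref{lem:infsup-b} and~\ref{lem:inf-sup-c} by their discrete counterparts \ref{hd1} and \ref{hd2}. First, exactly as in Proposition~\ref{prop:existence_q}, I would dispose of the multiplier by showing that \eqref{eq:varmdis} is equivalent to the discrete analogue of \eqref{eq:var_mod} in which the term $\int_\Omega \mathbf{q}_h : \bs{\xi}_h \dx$ is replaced by $\gamma\tau_s\int_\Omega |\bs{\theta}_h|_\gamma^{-1}(\bs{\theta}_h:\bs{\xi}_h)\dx$. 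The key point is that setting $\mathbf{q}_h := \gamma\tau_s\,\bs{\theta}_h/|\bs{\theta}_h|_\gamma$ and testing the last equation of \eqref{eq:varmdis} against $\mb{w}_h$ identifies $\mathbf{q}_h$ uniquely, since $|\bs{\theta}_h|_\gamma \geq \tau_s > 0$ keeps $\gamma\tau_s\bs{\theta}_h - |\bs{\theta}_h|_\gamma\mathbf{q}_h$ an $\mathbb{L}^{p'}(\Omega)$ identity; this reduces the problem to the discrete twofold saddle point built from $a_\gamma$, $b$ and $c$, as in \eqref{eq:var_mod_q}.

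For the reduced discrete saddle point, the argument is the finite-dimensional transcription of Theorem~\ref{thm:wellp_var_mod}. By \ref{hd2}, the discrete operator induced by $c$ is an isomorphism from $\mb{\mathcal{Q}}_h/\mb{K}_{1,h}$ onto $\mb{\mathcal{V}}_h^{*}$, which produces a unique $(\bs{\sigma}_{0,h},\phi_{0,h})\in\mb{\mathcal{Q}}_h/\mb{K}_{1,h}$ solving the third equation of \eqref{eq:varmdis} with right-hand side $\int_\Omega \mb{f}\cdot\mb{v}_h\dx$. Writing $\bs{\sigma}_h = \tilde{\bs{\sigma}}_h + \bs{\sigma}_{0,h}$ and $\phi_h = \tilde{\phi}_h + \phi_{0,h}$ reduces the system to one posed over $\mb{T}_h$ and the kernel $\mb{K}_{1,h}$; then \ref{hd1} supplies a particular $\bs{\theta}_{0,h}$ realizing the Dirichlet datum on $\mb{K}_{1,h}$, and the problem collapses to: find $\tilde{\bs{\theta}}_h\in\mb{K}_{2,h}$ with $a_\gamma(\tilde{\bs{\theta}}_h,\bs{\xi}_h) = -b^t(\bs{\xi}_h,(\bs{\sigma}_{0,h},\phi_{0,h}))$ for all $\bs{\xi}_h\in\mb{K}_{2,h}$.

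The core solvability step is then an application of the Minty--Browder theorem on the closed, finite-dimensional subspace $\mb{K}_{2,h}\subset\mathbb{L}^{p}(\Omega)$. Here I would invoke Lemma~\ref{lem:opAmon}: since $\mathbf{A}_\gamma$ is coercive, hemicontinuous and strictly monotone on all of $\mathbb{L}^{p}(\Omega)$, these three properties restrict verbatim to $\mb{K}_{2,h}$, yielding a unique $\tilde{\bs{\theta}}_h$ and hence a unique $\bs{\theta}_h = \tilde{\bs{\theta}}_h + \bs{\theta}_{0,h}$. Finally, \ref{hd1} applied to the second equation of \eqref{eq:varmdis} determines $(\tilde{\bs{\sigma}}_h,\tilde{\phi}_h)$ uniquely in $\mb{K}_{1,h}$, fixing $(\bs{\sigma}_h,\phi_h)$, and \ref{hd2} applied to the first equation determines $(\mb{u}_h,\hat{\mb{u}}_h,\lambda_h)$ uniquely; together with the recovered $\mathbf{q}_h$ this gives the full uniqueness.

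I expect the only genuine subtlety --- and the step to state carefully --- to be that \ref{hd1} is an inf-sup hypothesis \emph{over the discrete kernel} $\mb{K}_{1,h}$, not a mere restriction of Lemma~\ref{lem:infsup-b}, because $\mb{K}_{2,h}$ is in general nonconforming with respect to the continuous kernel $\mb{K}_{2}$. This is precisely why well-posedness cannot be inherited from Theorem~\ref{thm:wellp_var_mod} and must be re-derived under \ref{hd1}--\ref{hd2}; everything else is routine, since the monotone-operator machinery of Lemma~\ref{lem:opAmon} is space-independent and the reduction to the multiplier-free formulation is identical to the continuous one.
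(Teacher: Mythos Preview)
Your proposal is correct and follows exactly the route the paper takes: the paper's proof is a single sentence stating that existence and uniqueness ``follow the same arguments from Theorem~\ref{thm:wellp_var_mod} and Proposition~\ref{prop:existence_q}, replacing the continuous inf-sup conditions by their discrete versions,'' and your write-up is a faithful, more detailed expansion of precisely that sketch, including the observation that \ref{hd1} must be assumed on the discrete kernel $\mb{K}_{1,h}$ rather than inherited from Lemma~\ref{lem:infsup-b}.
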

\begin{proof}
	Existence and uniqueness of  $(\bs{\theta}_h, \bs{\sigma}_h, \mathbf{u}_h,\hat{\mb{u}}_h,\phi_h,\lambda_h,\mathbf{q}_h) \in \mb{T}_h\times\mb{T}_{h,\mathrm{div}}\times\mathbf{V}_h\times \mb{T}_{h,\mathrm{sk}} \times Q_h\times \mathbb{R}\times\mathbf{W}_h$ solving \eqref{eq:varmdis} follows the same arguments from Theorem \ref{thm:wellp_var_mod}, and Proposition \ref{prop:existence_q}, replacing the continuous inf-sup conditions by their discrete versions.
\end{proof}
\subsection{Specific finite element subspaces}

Given a set $D \subset \mathbb{R}^d$ and an integer $k \geq 0$, we define $P^k(D)$ as the space of polynomial functions on $D$ of degree up to $k$, with vector and tensorial versions denoted by $\mb{P}^k(D) \coloneqq [P^k(D)]^d$ and $\mathbb{P}^k(D) \coloneqq [P^k(D)]^{d\times d}$. Hence, problem \eqref{eq:varmdis} can be approximated using the following finite element subspaces:
\begin{align*}
\mb{T}_h &\coloneqq \left\{ \bs{\sigma} \in \Hdiv \,:\, \bs{\sigma}|_{T} \in \mathbb{P}^{k+1}(T), \quad \forall T \in \mathcal{T}_h \right\}, \\
\mb{T}_{h,\mathrm{div}} &\coloneqq \left\{ \bs{\phi} \in \Hdiv \,:\, \bs{\phi}|_{T} \in \mathbb{P}^{k+1}(T), \quad \forall T \in \mathcal{T}_h \right\}, \\
\mb{T}_{h,\mathrm{sk}} &\coloneqq \left\{ \bs{\psi} \in \Lskew \,:\, \bs{\psi} \in \bs{\psi}|_{T} \in \mathbb{P}^{k}(T), \quad \forall T \in \mathcal{T}_h \right\}, \\
\mb{V}_h &\coloneqq \left\{ \mb{v} \in \mb{L}^p(\Omega) \,:\, \mb{v}|_{T} \in \mb{P}^{k}(T), \quad \forall T \in \mathcal{T}_h \right\}, \\
Q_h &\coloneqq \left\{ \phi \in L^{p'}(\Omega) \,:\, \phi|_{T} \in P^k(T), \quad \forall T \in \mathcal{T}_h \right\}, \\
\mb{W}_h &\coloneqq \left\{ \mb{w} \in \mathbb{L}^{p'}(\Omega) \,:\, \mb{w}|_{T} \in \mathbb{P}^{k+1}(T), \quad \forall T \in \mathcal{T}_h \right\}.
\end{align*}
Here the discrete product space, $\mb{T}_{h,\mathrm{div}}\times \mb{T}_{h,\mathrm{sk}} \times \mb{V}_h$ constitutes the element spaces of Arnold, Falk and Winther (AFW) introduced and proved to be stable in \cite{arnold2007} for the mixed finite element approximation of Dirichlet linear elasticity. Furthermore, we remark that the spaces $\mb{T}_h$, $\mb{T}_{h,\mathrm{div}}$,$\mb{T}_{h,\mathrm{sk}}$, $\mathbf{V}_h$ and $Q_h$ satisfy the inf-sup conditions \ref{hd1} and \ref{hd2}. We remit to the Banach spaces-based analysis in \cite[Section 4.4]{gatica2021} (see also \cite[Theorem 5.1]{Barrientos2002}, \cite[Section 4]{Farhloul2017}, \cite[Lemma 4.2 and Lemma 4.3]{ervin2008}) for further details.

\subsection{SSN Linearization}
Once we have formulated problem \eqref{eq:varmdis}, we propose a SSN linearization for this system. This is enabled thanks to the fact that both the Frobenius norm and the max functions are semismooth in finite dimension spaces. 

The SSN linearization about $(\bs{\theta}_h, \bs{\sigma}_h, \mathbf{u}_h,\hat{\mb{u}}_h,\phi_h,\lambda_h,\mathbf{q}_h)$ gives the following problem: find $\delta_{\bs{\theta}}\in \mb{T_h}$, $\delta_{\bs{\sigma}} \in \bs{T}_{h,\mathrm{div}}$, $\delta_{\mathbf{u}}\in \mathbf{V}_h$, $\delta_{\hat{\mb{u}}} \in  \bs{T}_{h,\mathrm{sk}}$, $\delta_{\phi}\in Q_h$, $\delta_{\lambda} \in \mathbb{R}$ and $\delta_{\mathbf{q}}\in \mathbf{W}_h$, such that
\begin{subequations}\label{SSN}
\begin{equation}\label{SSN1}
\begin{array}{lll}
-\int_\Omega \delta_{\bs{\theta}}\,:\,\bs{\tau}_h\,dx -\int_\Omega \psi_h\, \tr( \delta_{\bs{\theta}})\,dx -\int_\Omega  \delta_{\mathbf{u}}\cdot \Div \bs{\tau}_h\,dx -\int_\Omega  \delta_{\hat{\mathbf{u}}}\,:\,\bs{\tau}_h\,dx\vspace{0.2cm}\\\hspace{2.5cm} +\delta_\lambda\int_\Omega \tr(\bs{\tau}_h)\,dx=\cblue{-\int (\bs{\tau}_h\cdot\bs{n})\cdot\mb{u}^D\ds}+\int_\Omega \bs{\theta}_h\,:\,\bs{\tau}_h\,dx + \int_\Omega \psi_h \tr(\bs{\theta}_h)\,dx  \vspace{0.2cm}\\\hspace{5cm}+ \int_\Omega \mathbf{u}_h\cdot \Div \bs{\tau}_h\,dx+ \int_\Omega \hat{\mathbf{u}}_h\,:\,\bs{\tau}_h\,dx - \lambda_h\int_\Omega \tr(\bs{\tau}_h)\,dx,
\end{array}
\end{equation}
\begin{equation}
\begin{array}{lll}
\int_\Omega \frac{\nu'(|\bs{\theta}_h|)}{|\bs{\theta}_h|}(\bs{\theta}_h\,:\,\delta_{\bs{\theta}})\,(\bs{\theta}_h\,:\,\bs{\xi}_h)\,dx +\int_\Omega \nu(|\bs{\theta}_h|)(\delta_{\bs{\theta}}\,:\,\bs{\xi}_h)\,dx +\int_\Omega  \delta_{\mathbf{q}}\,:\, \bs{\xi}_h\,dx \vspace{0.2cm}\\\hspace{2cm} -\int_\Omega  \delta_{\bs{\sigma}}\,:\,\bs{\xi}_h\,dx -\int_\Omega \delta_{\phi}\tr(\bs{\xi}_h)\,dx=- \int_\Omega \nu(|\bs{\theta}_h|)(\bs{\theta}_h\,:\,\bs{\xi}_h)\,dx - \int_\Omega \mathbf{q}_h\,:\, \bs{\xi}_h\,dx  \vspace{0.2cm}\\\hspace{6cm}+ \int_\Omega \bs{\sigma}_h\,:\,\bs{\xi}_h\,dx+ \int_\Omega \phi_h\tr(\bs{\xi}_h)\,dx,
\end{array}
\end{equation}
\begin{equation}
\begin{array}{lll}
-\int_\Omega \mathbf{v}_h\cdot \Div\delta_{\bs{\sigma}}\,dx -\int_\Omega \hat{\mathbf{v}}_h\,:\,\delta_{\bs{\sigma}}\,dx +\eta_h\int_\Omega  \tr(\delta_{\bs{\sigma}})\,dx \vspace{0.2cm}\\\hspace{1cm} = \int_\Omega \mathbf{v}_h\cdot \Div\bs{\sigma}_h\,dx +  \int_\Omega \hat{\mathbf{v}}_h\,:\, \bs{\sigma}_h\,dx  -\eta_h\int_\Omega \tr(\bs{\sigma}_h)\,dx+ \int_\Omega \mathbf{v}_h\cdot\mathbf{f}\,dx,
\end{array}
\end{equation}
\begin{equation}\label{SSN4}
\begin{array}{lll}
\gamma\tau_s\int_\Omega \delta_{\bs{\theta}}\,:\mathbf{w}_h\,dx -\gamma\int_\Omega\frac{\chi_{\mathcal{A}_\gamma}}{|\bs{\theta}_h|} \left(\bs{\theta}_h\,:\,\delta_{\bs{\theta}}\right)\,(\mathbf{q}_h\,:\,\mathbf{w}_h)\,dx  -\int_\Omega |\bs{\theta}_h|_\gamma\,\delta_{\mathbf{q}}\,:\,\mathbf{w}_h\,dx\vspace{0.2cm}\\\hspace{5cm} = -\gamma\tau_s \int_\Omega \bs{\theta}_h\,:\,\mathbf{w}_h\,dx + \int_\Omega |\bs{\theta}_h|_\gamma \mathbf{q}_h\,:\,\mathbf{w}_h\,dx.
\end{array}
\end{equation}
\end{subequations}
for all $(\bs{\tau}_h,\psi_h) \in (\mb{T}_{h,\mathrm{div}}\times Q_h)$, $\bs{\xi}_h \in \mb{T}_h$,  $(\mb{v}_h,\hat{\mb{v}}_h,\eta_h) \in  \mb{V}_h\times \mb{T}_{h,\mathrm{sk}}\times \mathbb{R}$ and $\mb{w}_h \in \mb{W}_h$. Here, we have that
\[
\chi_{\mathcal{A}_\gamma}:=\left\{
\begin{array}{lll}
1& \mbox{if $|\bs{\theta}_h|\geq \frac{\tau_s}{\gamma}$}\vspace{0.2cm}\\0&\mbox{otherwise}.
\end{array}
\right. 
\]

\subsection{Remarks on the Convergence of the SSN Method}
We follow the ideas in \cite{Sun-Han} in order to analyse the convergence of the SSN linearization \eqref{SSN}. Before going on and for the article's completeness, we recall the definition of slantly differentiable function (see \cite{JC2010, Nashed}).
\begin{defi}
\cblue{Let $X$ and $Y$ be two Banach spaces, and let $D\subset X$ be an open domain. A function $F:D\subset X\rightarrow Y$ is said to be slantly differentiable at $x\in D$ if there exists a mapping $G_F:D\rightarrow\mathcal{L}(X,Y)$ such that the family $\{G_F(x+h)\}$ of bounded linear operators is uniformly bounded in the operator norm for $h$ sufficiently small and
\[
\underset{h\rightarrow 0}{\lim}\frac{F(x+h)-F(x)- G_F(x+h) h}{\|h\|}=0.
\]
}
\end{defi}
Let us start the discussion on the convergence of the SSN iteration, by noticing that linearization \eqref{SSN} was proposed for the discretized system \eqref{eq:varmdis}. Therefore, since we are in a finite dimensional setting, the Huber term $|\bs{\theta}|_\gamma$ is semismooth and slantly differentiable (see \cite{JC2010, JC2012} and the references therein).  Hence,  the left hand side in the system \eqref{SSN} is well defined. 

We now discuss the existence of solutions for system \eqref{SSN}. As it was proved in Section \ref{sec:galerkinwp}, the \cblue{discretized} system \eqref{eq:varmdis} is well posed, which guarantees that the FEM approach induces positive definite matrices associated to the forms in the variational system. Thus, the matrix formulation involves matrices which are always non singular and the system \eqref{SSN} has a unique solution. This fact is clear in the case of coercive bilinear forms involving scalar products such as the Frobenius product.  However, for the following nonlinear forms, the positive definiteness needs to be discussed.
\begin{equation}
\begin{array}{rcl}
a_{\bs{\theta}}(\bs{\zeta}_h,\bs{\xi}_h)&:=&\int_\Omega \nu(|\bs{\theta}_h|)(\bs{\zeta}_h\,:\,\bs{\xi}_h)\,dx + \int_\Omega \frac{\nu'(|\bs{\theta}_h|)}{|\bs{\theta}_h|}(\bs{\theta}_h\,:\,\bs{\zeta}_h)\,(\bs{\theta}_h\,:\,\bs{\xi}_h)\,dx \vspace{0.2cm}\\ c_{\mathbf{q},\bs{\theta}}(\bs{\zeta}_h,\mathbf{w}_h)&:=&\gamma\tau_s\int_\Omega \bs{\zeta}_h\,:\mathbf{w}_h\,dx-\gamma\int_\Omega\frac{\chi_{\mathcal{A}_\gamma}}{|\bs{\theta}_h|} \left(\bs{\theta}_h\,:\,\bs{\zeta}_h\right)\,(\mathbf{q}_h\,:\,\mathbf{w}_h)\,dx.
\end{array}
\end{equation}
Let us prove that these forms are indeed positive definite, which immediately \cblue{yields} that the system has a unique solution.  We start by focusing on $a_{\bs{\theta}}(\bs{\zeta}_h,\bs{\xi}_h)$.  First, note that \cblue{$\nu(t)=\frac{\cblue{\widetilde{\nu}\,'}(t)}{t}$}, which, thanks to \eqref{Delta2}, yields that $\nu(|\bs{\theta}_h|)>0$. Next, we introduce the following operator
\[
\langle \mathbf{A}(\bs{\theta}_h)\,,\,\bs{\xi}_h\rangle:=\int_\Omega \nu(|\bs{\theta}_h|)(\bs{\theta}_h\,:\,\bs{\xi}_h)\,dx.
\]
By following ideas in Lemma \ref{lem:opAmon}, we can state that there exists $C>0$, such that
\[
\langle \mathbf{A}(\bs{\theta}_h)- \mathbf{A}(\bs{\xi}_h)\,,\,\bs{\theta}-\bs{\xi}_h\rangle\geq C \int_\Omega\cblue{\widetilde{\nu}\,''}\left(|\bs{\theta}_h|+|\bs{\xi}_h|\right)|\bs{\theta}_h-\bs{\xi}_h|^2\,dx,\,\,\forall\,\bs{\theta}_h,\bs{\xi}_h\in \mathbf{T}_h.
\]
Next,  by considering $t\in\Real$ and $\bs{\upsilon}_h\in \mathbf{T}_h$, we have that
\[
\left\langle \frac{\mathbf{A}(\bs{\theta}_h+t\bs{\upsilon}_h)-\mathbf{A}(\bs{\theta}_h)}{t}\,,\,\bs{\upsilon}_h\right\rangle \geq \int_\Omega \cblue{\widetilde{\nu}\,''}\left(|\bs{\theta}_h+ t\bs{\upsilon}_h|+|\bs{\xi}_h|\right)|\bs{\upsilon}_h|^2\,dx.
\]
Finally, by noticing that $\cblue{\widetilde{\nu}\,''}(t)$ is a continuous function such that $\cblue{\widetilde{\nu}\,''(t)\approx\frac{\widetilde{\nu}\,'(t)}{t}}>0$, and taking limits $t\rightarrow 0$, we obtain that
\[
\begin{array}{rcl}
\left\langle D\mathbf{A}(\bs{\theta}_h)\bs{\upsilon}_h\,,\,\bs{\upsilon}_h\right\rangle &=& \int_\Omega \nu(|\bs{\theta}_h|)(\bs{\upsilon}_h\,:\,\bs{\upsilon}_h)\,dx+ \int_\Omega \frac{\nu'(|\bs{\theta}_h|)}{|\bs{\theta}_h|}(\bs{\theta}_h\,:\,\bs{\upsilon}_h)\,(\bs{\theta}_h\,:\,\bs{\upsilon}_h)\,dx\vspace{0.2cm}\\&\geq& \int_\Omega\cblue{\widetilde{\nu}\,''}\left(|\bs{\theta}_h|+|\bs{\xi}_h|\right)|\bs{\upsilon}_h|^2\,dx>0.
\end{array}
\]
Thus, we conclude that the form $a_{\bs{\theta}}(\bs{\zeta}_h,\bs{\xi})$ is positive definite.

In the case of the form $c_{\mathbf{q},\bs{\theta}}(\bs{\zeta}_h,\mathbf{w}_h)$, it is known, by following the arguments in \cite[Sec. 6.1]{JC2010}, that a sufficient condition for this form to be positive definite is that $|\mathbf{q}|\leq \tau_s$, a.e. in $\Omega$. Unfortunately, this condition does not necessarily hold. Consequently, we propose a projection procedure on the multiplier $\mathbf{q}_h$ to guarantee that $|\mathbf{q}_h|\leq \tau_s$ a.e. in $\Omega$. Thus, we replace $\mathbf{q}_h$ by
\[
\widehat{\mathbf{q}}_h:=\frac{\tau_s}{\max\{\tau_s,|\mathbf{q}_h|\}}\mathbf{q}_h
\]
when assembling the left hand side of \eqref{SSN}, resulting in the following \cblue{slight} modification of the equation \eqref{SSN4}
\begin{equation}\label{SSN4t}\tag{\ref{SSN4}'}
\begin{array}{lll}
\gamma\tau_s\int_\Omega \delta_{\bs{\theta}}\,:\mathbf{w}_h\,dx  -\gamma\int_\Omega\frac{\chi_{\mathcal{A}_\gamma}}{|\bs{\theta}_h|} \left(\bs{\theta}_h\,:\,\delta_{\bs{\theta}}\right)\,(\widehat{\mathbf{q}}_h\,:\,\mathbf{w}_h)\,dx -\int_\Omega |\bs{\theta}_h|_\gamma\,\delta_{\mathbf{q}}\,:\,\mathbf{w}_h\,dx \vspace{0.2cm}\\\hspace{5cm}= -\gamma\tau_s \int_\Omega \bs{\theta}_h\,:\,\mathbf{w}_h\,dx + \int_\Omega |\bs{\theta}_h|_\gamma \mathbf{q}_h\,:\,\mathbf{w}_h\,dx.
\end{array}
\end{equation}

Now, let us turn our attention to the convergence analysis.  First, we consider the system \eqref{eq:varmdis} as a nonlinear variational equation such as
\[
\bs{\Psi}^h (\bs{\omega})=0,
\]
where $\bs{\omega}$ stands for all the unknowns in the system.  Therefore, the system \eqref{SSN1}-\eqref{SSN4t} represents a semismooth Newton iteration of the form
\begin{equation}\label{SSNit}
\bs{\Xi}^h(\bs{\hat{\omega}}) \delta_{\bs{\omega}} = -\bs{\Psi}^h(\bs{\omega}),
\end{equation}
with $\bs{\Xi}^h(\bs{\hat{\omega}})\in \partial\bs{\Psi}^h(\bs{\omega})$ and $\bs{\hat{\omega}}$ representing the modified unknowns, taking into account the projection step on $\mathbf{q}_h$.  Since we are working with the discretized system \eqref{eq:varmdis}, we can state that all the functions involved in this system are slantly differentiable or semismooth. In particular, the $\max$ and norm functions in \eqref{SSN4t} are known to be semismooth \cite{JC2010}. Therefore, the function $\bs{\Psi}^h(\bs{\omega})$, which represents the variational FEM discretization, is semismooth. Further, the fact that all the main forms in the left hand side of \eqref{SSN} are positive definite, guarantees that all $\bs{\Xi}^h(\bs{\hat{\omega}})\in \partial\bs{\Psi}^h(\bs{\omega})$  are nonsingular.  Finally, by using the same argumentation as in \cite[Sec. 6.1]{JC2010}, we can prove that the projected dual variable $\hat{\mathbf{q}}^h$ converges to the actual $\mathbf{q}^h$ as long as the SSN iteration is developed, and, consequently, $\bs{\Xi}^h(\bs{\hat{\omega}})\rightarrow \bs{\Xi}^h(\bs{\omega})$. This fact implies that all the \cblue{hypotheses} in \cite[Th. 4.2.]{Sun-Han} are fulfilled, which \cblue{yields} the \cblue{local} superlinear convergence of the proposed SSN iteration \eqref{SSN}. \cblue{Although Theorem \cite[Th. 4.2.]{Sun-Han} guarantees only a local superlinear convergence rate, the algorithm does achieves convergence for different initialization steps, needing a few more iterations, depending on the case.  This behaviour can be explained with the projection step, since it has a globalization effect in the SSN loop (see \cite{JC2010}). We will discuss this effect, in the next section.}

\section{Numerical Results} \label{sec:results}

In this section, we present numerical experiments which illustrate the main properties of the algorithm.  All numerical routines have been carried out using the open-source finite element library FEniCS \cite{alnaes} and polynomial degree \cblue{$k=0$ and} $k=1$. Regarding the implementation of the semismooth Newton iterative method, the solution of all linear systems appearing at each SSN iteration is conducted with the multifrontal massively parallel sparse direct solver MUMPS, and the iterations are terminated once the relative error \cblue{computed as the $\ell^2$ norm of the entire residual coefficient vectors ($\delta_{\bs{\theta}},\delta_{\bs{\sigma}},\delta_{\mb{u}},\delta_{\hat{\mb{u}}},\delta_{\phi},\delta_{\lambda},\delta_{\mb{q}}$) obtained from problem \eqref{SSN}}, is sufficiently small, i.e.,
\begin{align*}
\frac{\norm{\mathrm{res}^{n}}_{\ell^2}}{\norm{\mathrm{res}^{0}}_{\ell^2}} \leq \mathrm{tol},
\end{align*}
where $\norm{\cdot}_{\ell^2}$ is the standard $\ell^2$ norm and $\mathrm{tol}$ is a fixed tolerance chosen as $\mathrm{tol} = \num{1e-5}$ \cblue{or $\mathrm{tol} = \num{1e-10}$ when testing convergence}.

\begin{figure}[!t]
	\begin{center}
		\includegraphics[width=0.5\textwidth]{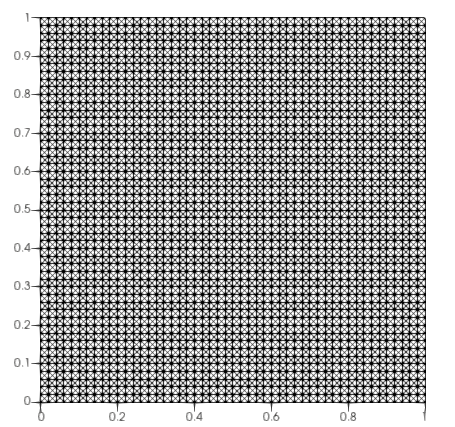}
	\end{center}
	\caption{Crossed pattern triangulation used for two-dimensional numerical tests.}  \label{img:wireframe2d}
\end{figure}

In the examples below, the SSN iteration is initialized with the solution of a similar dual mixed formulation for the discrete Stokes problem. 

\subsection{Reservoir flow}
\begin{figure}[!t]
	\begin{center}
		\includegraphics[width=0.4\textwidth]{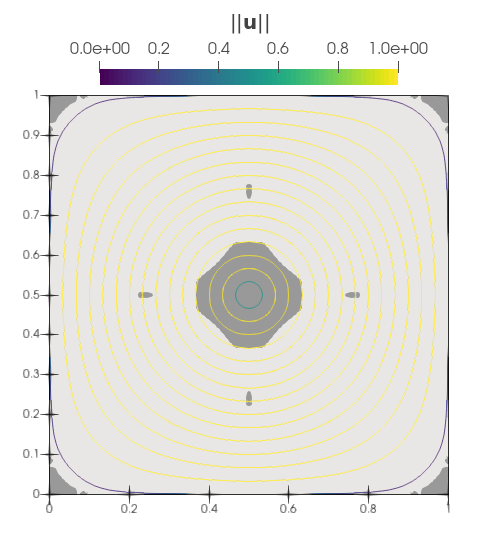}
		\includegraphics[width=0.4\textwidth]{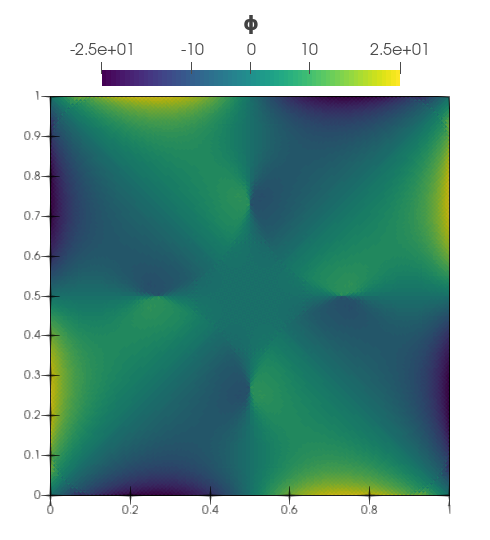}\\
		\includegraphics[width=0.4\textwidth]{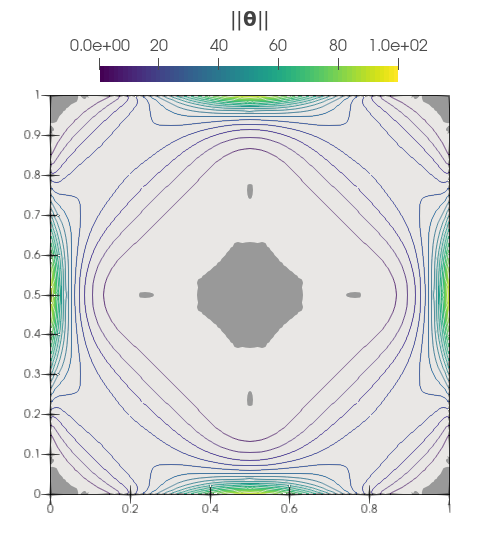}
	\end{center}
	\caption{Reservoir flow: \cblue{Top, left: velocity stream lines and Inactive set (dark gray)}; Top, right: pressure magnitude; Bottom: \cblue{strain tensor magnitude and Inactive set.} Parameters: Herschel-Bulkley model $p=1.75$, $h=1/100$, $\gamma = \num{1e3}$, $\mu=1.0$, \cblue{$\tau_s=10.0$}.}  \label{img:resTest1}
\end{figure}
In our first numerical example, we calculate the reservoir flow in the unit square $\Omega_h \coloneqq (0,1) \times (0,1)$, considering homogeneous Dirichlet boundary conditions and
\begin{align*}
	\mb{f}(x_1,x_2) \coloneqq 300(x_2 - 0.5, 0.5 - x_1).
\end{align*}
We study the Herschel-Bulkley fluid given by $p=1.75$, $\tau_s = 10$ and $\mu = 1$, and consider $k=0$ and a mesh step size \cblue{$h = 1/100$ (40 000 cells) in a \textit{crossed} pattern, that helps with the symmetry of the numerical results (see figure \ref{img:wireframe2d}).}

In Fig. \ref{img:resTest1} the velocity vector fields, the flow streamlines, pressure, strain tensor, and the computed active and inactive sets, are depicted. As expected, we observe a clockwise rotation of the velocity field and a symmetric pattern for all relevant variables.
	\begin{figure}[!t]
		\begin{center}
			\includegraphics[width=0.4\textwidth]{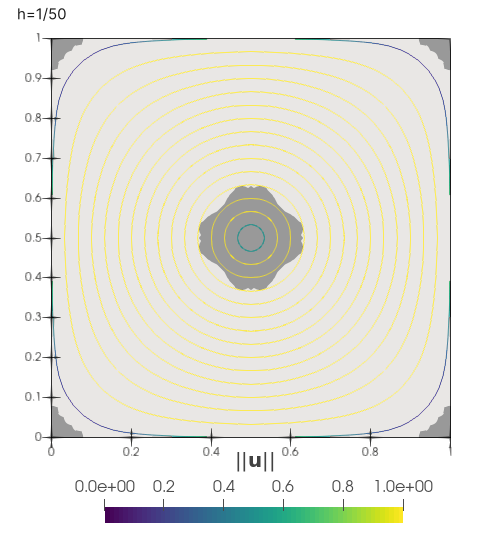}\includegraphics[width=0.4\textwidth]{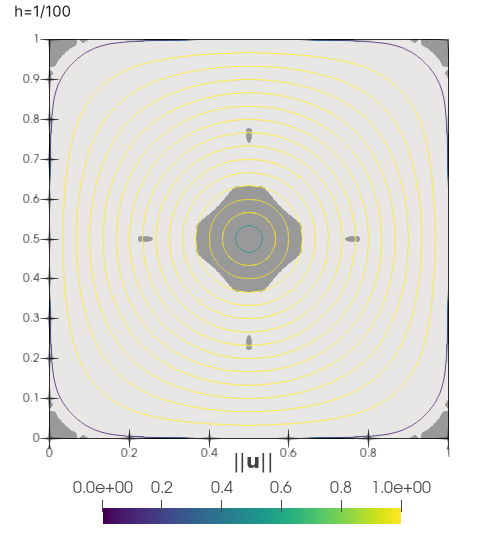}\\
			\includegraphics[width=0.4\textwidth]{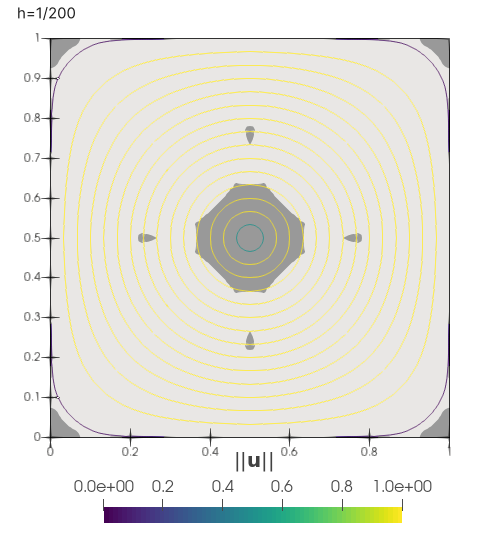}\includegraphics[width=0.4\textwidth]{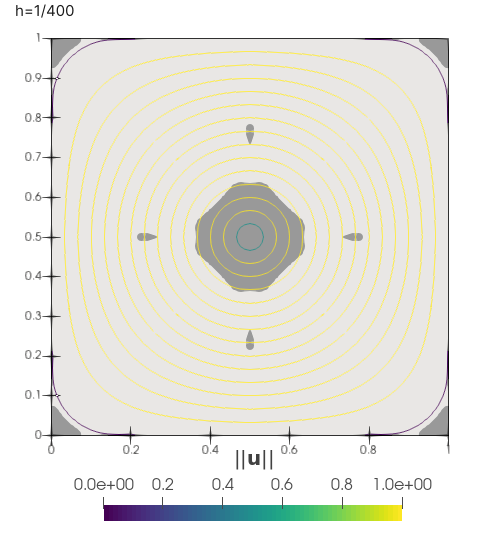}\\
			\includegraphics[width=0.45\textwidth]{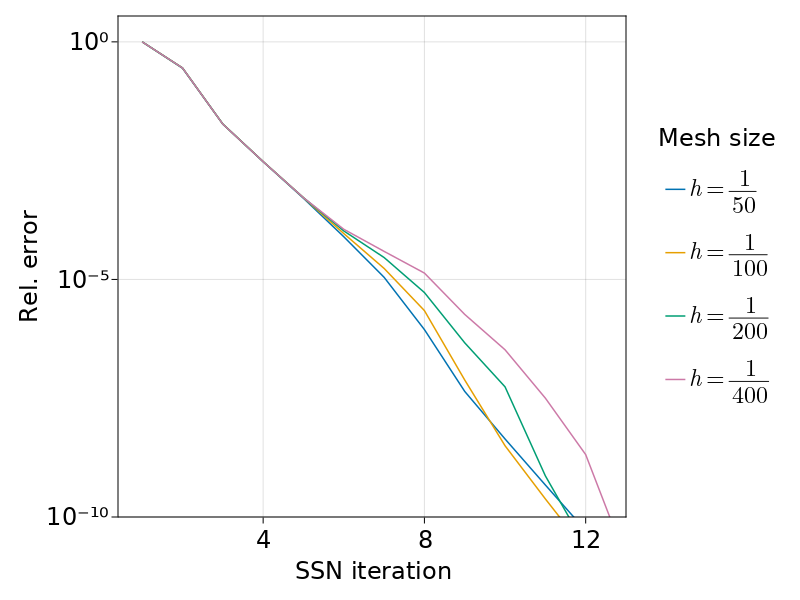}
		\end{center}
		\caption{\cblue{Reservoir Test. Top: velocity stream lines and Inactive set (dark gray). Bottom: Relative residual error and total number of iterations. Parameters: Herschel-Bulkley model $p=1.75$, tol=$\num{1e-10}$, $\gamma=\num{1e3}$, $\mu=1.0$, $\tau_s=10.0$.}} \label{img:resTest2_conv}
\end{figure}

	\begin{figure}[!t]
		\begin{center}
			\includegraphics[align=c,width=0.4\textwidth]{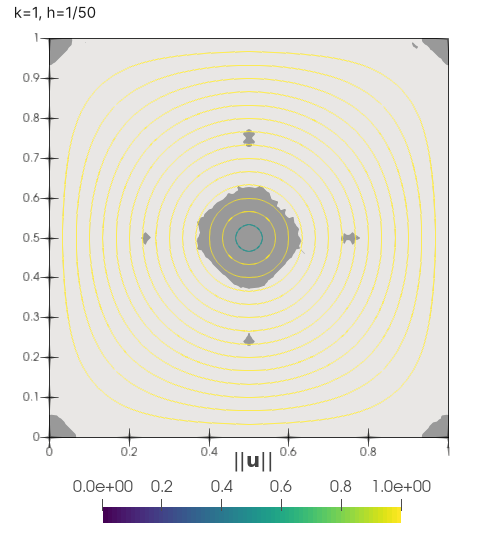}\includegraphics[align=c,width=0.45\textwidth]{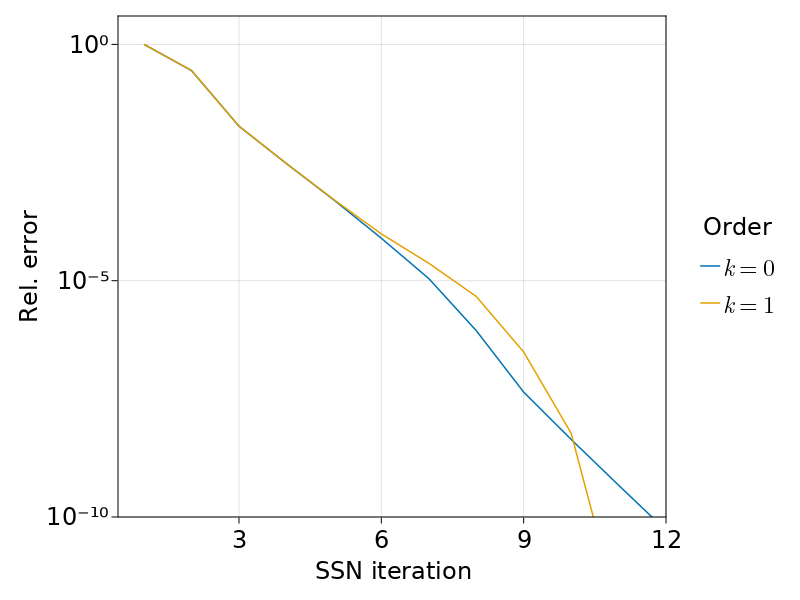}
		\end{center}
		\caption{\cblue{Reservoir Test. Top: velocity stream lines and Inactive set (dark gray) using $k=1$. Bottom: Relative residual error and total number of iterations for $k=0$ and $k=1$. Parameters: Herschel-Bulkley model $p=1.75$, $h=1/50$, tol=$\num{1e-10}$, $\gamma=\num{1e3}$, $\mu=1.0$, $\tau_s=10.0$.}}  \label{img:resTest2_conv2}
\end{figure}
\cblue{
	The influence of the mesh size has been investigated in Figure \ref{img:resTest2_conv}. It is shown that with respect to the number of iterations required to reach the prescribed tolerance ($\num{1e-10}$) the relation is quite weak, and the fast decay of the residuum is preserved through all the range of $h$ values studied. However, it's interesting to note, that even if the main features of the inactive zone are present and remain almost unchanged in all mesh sizes, some minor patters arise and develop only in finer meshes. Furthermore, in Figure \ref{img:resTest2_conv2}, we study the convergence with order $k=1$, as can be seen, increasing the order leads to faster error decay in the last iterations.}

\begin{figure}[!t]
	\begin{center}
		\includegraphics[width=0.55\textwidth]{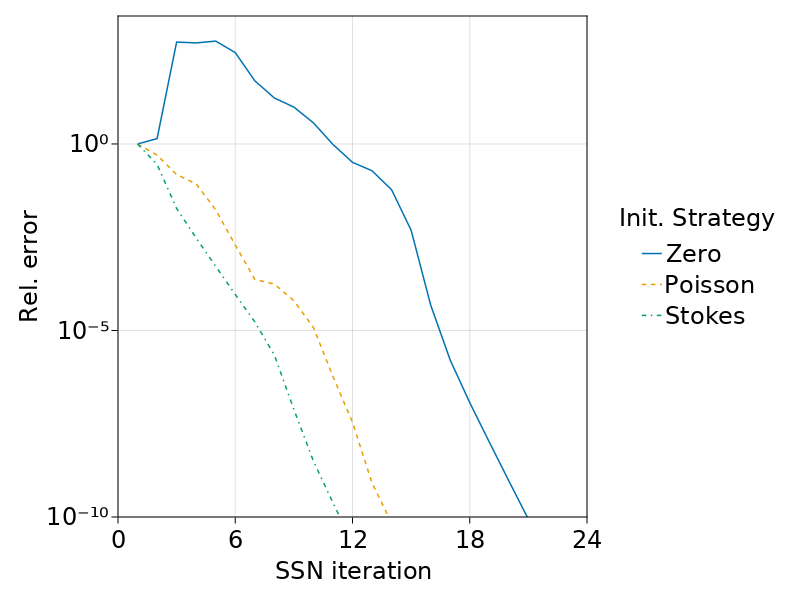}
	\end{center}
	\caption{\cblue{Reservoir Test. Relative residual error and total number of iterations under different initialization strategies. Parameters: Herschel-Bulkley model $p=1.75$, $h=1/100$, tol=$\num{1e-10}$, $\gamma=\num{1e3}$, $\mu=1.0$, $\tau_s=10.0$.}}  \label{img:resTest_convI}
\end{figure}
\cblue{Now we turn our attention to the influence of the initialization strategy on the convergence of the SSN iteration, results are shown in figure \ref{img:resTest_convI}. Even if a naive zero initialization for all variables still achieve convergence with fast error decay in the last steps, it is clearly better to start with a reasonable initial value for the velocity field (and in consequence for $\bs{\theta}$). In fact, we tested our scheme using the solution of a Poisson problem under the same boundary conditions and also solving a dual-mixed Stokes problem as alternative initialization strategies. We obtained error decay since the first iteration in both cases, with the Stokes strategy inducing the best results.}

	\begin{figure}[!t]
		\centering
		\begin{minipage}[b]{1.0\linewidth}
			\centering
			\includegraphics[align=c,width=0.5\textwidth]{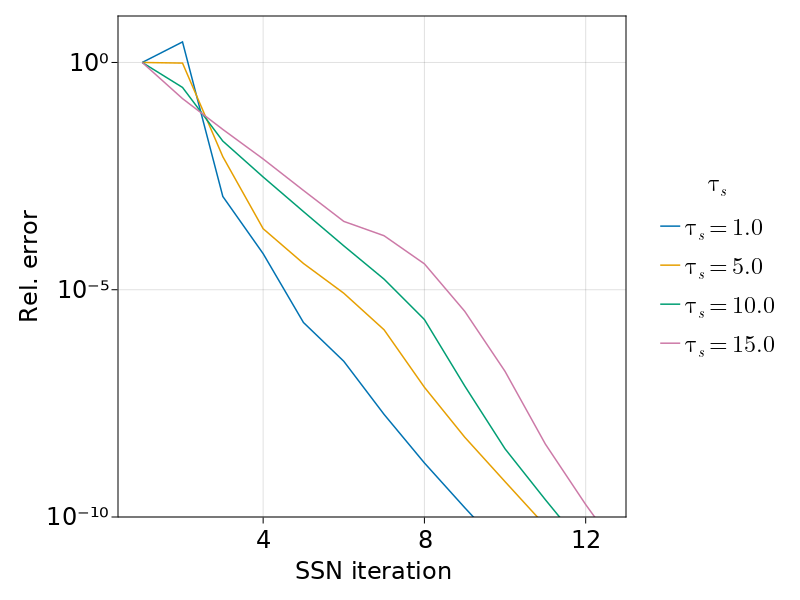}
			\qquad
			\begin{tabular}{r|cc}
				\hline 
				$\tau_s$ & \# Its & $|\mathcal{A}_{\gamma}|$ \tabularnewline
				\hline
				\hline
				1 & 9 &     39916 \tabularnewline
				5.0 & 10 &    39228 \tabularnewline
				10.0 & 11 &    37835  \tabularnewline
				15 & 12 &    30025  \tabularnewline
				\hline 
			\end{tabular}
			\caption{\cblue{Reservoir Test. For each value $\tau_s$: number of iterations and final size of the active set $\mathcal{A}_{\gamma}$. Parameters: Herschel-Bulkley model $p=1.75$, tol=$\num{1e-10}$, $k=0$, $h=1/100$, $\gamma=\num{1e3}$ and $\mu = 1.0$.}} \label{tab:resTaus}
		\end{minipage}
\end{figure}
\cblue{
	Finally, in Figure \ref{tab:resTaus}, we show the behavior of the numerical scheme for different values of $\tau_s$, with a focus on the numerical approximation of the final active set $\mathcal{A}_{\gamma}$. The SSN iteration behavior of the algorithm seems to increase slightly as we increase the value of $\tau_s$.}

\subsection{Flow in a driven cavity}

\begin{figure}[!t]
	\begin{center}
		\includegraphics[width=0.4\textwidth]{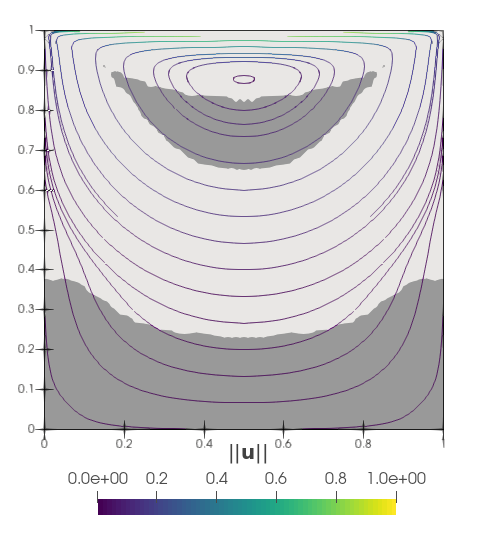}
		\includegraphics[width=0.4\textwidth]{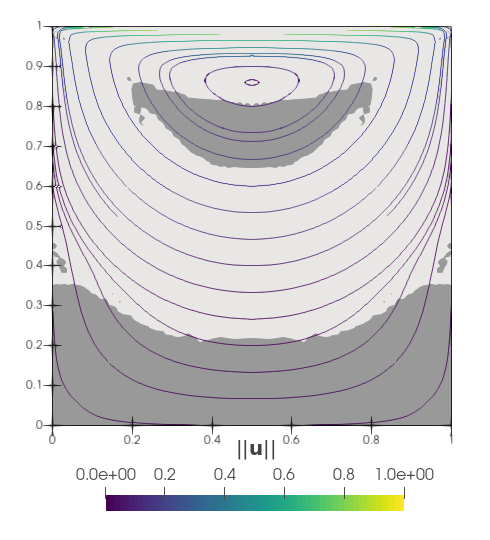}\\
		\includegraphics[width=0.4\textwidth]{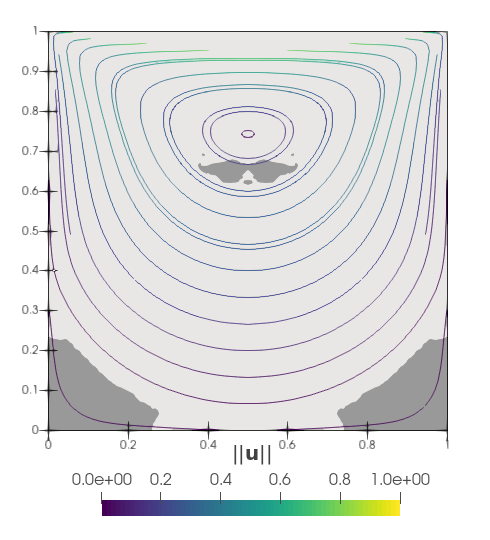}
	\end{center}
	\caption{Driven Cavity. velocity stream lines and \cblue{Inactive} set (dark gray) with Herschel-Bulkley model $p=1.6, 1.75 \text{ and } 4.0$ (from left to right and top to bottom). Parameters: $\gamma = \num{1e3}$, \cblue{$h=1/50$}, $\mu=1.0$.}  \label{img:cavTest1}
\end{figure}

Now, we simulate steady lid-driven cavity flow of a Herschel-Bulkley fluid in the unit square $\Omega_h \coloneqq (0,1) \times (0,1)$. We assume that $\Gamma_D = (x_1, 1)$, with $x_1 \in (0,1)$. We take $k=0$, $\mb{f} = 0$ and the following Dirichlet boundary condition
\begin{align*}
\mb{u}_h^D =
\begin{cases}
(1,0) &\text{if }x\in \Gamma_D \\
0 &		\text{otherwise}
\end{cases}
\end{align*}

Let us fix $\lambda = \num{1e3}$, $\mu = 1$ and $\tau_s = 2.5$. A first aim of this test is to assess how the method behaves for different values of $p$. The plots in Fig.~\ref{img:cavTest1} \cblue{computed with $h=1/50$ (equivalent to 10 000 cells)} show the different velocity vector field and flow streamlines patterns generated with $p = 1.6, 1.75$ and $4$. The results indicate that the formulation performs relatively well for this range of values.

Moreover, we performed the same tests with and without projection for $\mb{q}$ with similar results, suggesting that the modification introduced in \eqref{SSN4t} may be waived at the implementation level.

On other hand, Fig.~\ref{img:cavTest1} also displays the computed active and inactive sets, representing the rigid and plastic regions of the material. Here it is possible to observe the expected stagnation zones in the bottom of the cavity and the rigid zone in the upper part of the domain. Further, as various numerical simulations have shown concerning this test problem \cite{Glowinski2011}, the stagnation zones decrease for higher values of \cblue{$p$}.

\cblue{
\begin{figure}[!t]
	\begin{center}
		\includegraphics[align=c,width=0.45\textwidth]{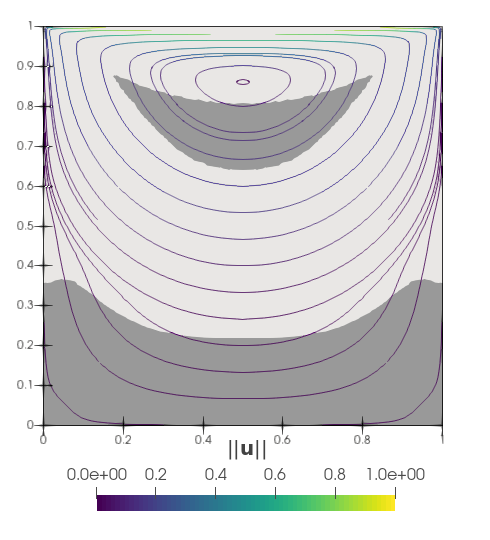}	\includegraphics[align=c,width=0.45\textwidth]{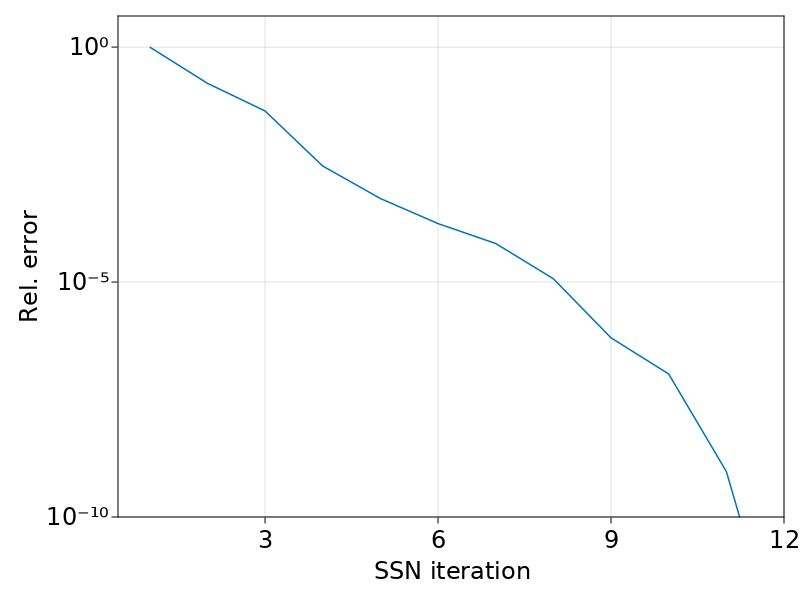}
	\end{center}
	\caption{Driven Cavity. Left: velocity stream lines and Inactive set (dark gray); Right: Relative residual error and total number of iterations. Parameters: \cblue{Herschel-Bulkley model $p=1.75$, tol=$\num{1e-10}$, h=1/100},  $\gamma=\num{1e3}$, $\mu=1.0$, $\tau_s=2.5$.}  \label{img:cavTest1_conv}
\end{figure}}
\begin{figure}[!t]
	\begin{center}
		\includegraphics[align=c,width=0.5\textwidth]{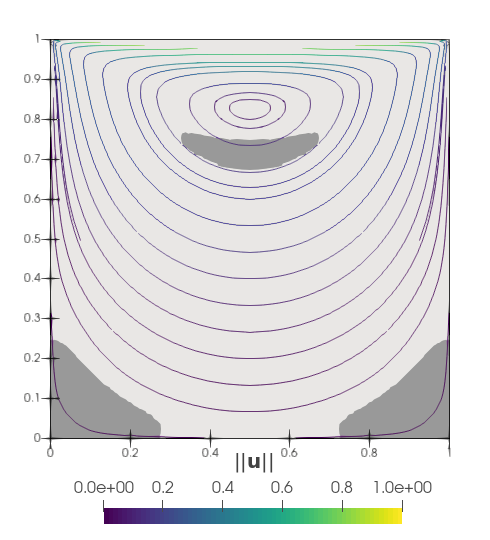}{\includegraphics[align=c,width=0.5\textwidth]{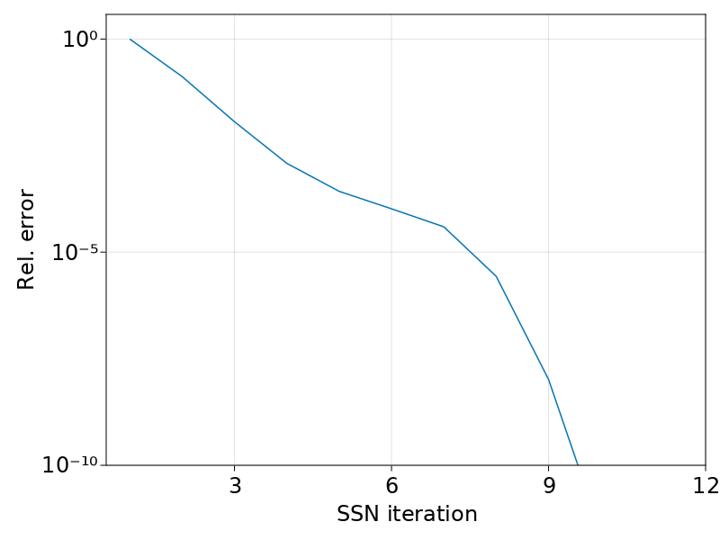}}
	\end{center}
	\caption{Driven Cavity. Left: \cblue{velocity stream lines and Inactive set (dark gray); Right: Relative residual error and total number of iterations}. Parameters: \cblue{Casson Law, tol=$\num{1e-10}$, h=1/100, k=0}, $\mu =1.0$, $\gamma = \num{1e3}$, $\tau_s=2.5$}  \label{img:cavTest1_casson}
\end{figure}
\cblue{
\begin{figure}[!t]
	\begin{center}
		\includegraphics[align=c,width=0.5\textwidth]{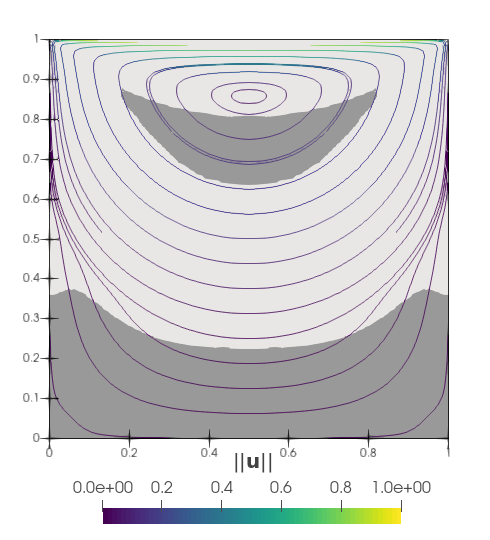}{\includegraphics[align=c,width=0.5\textwidth]{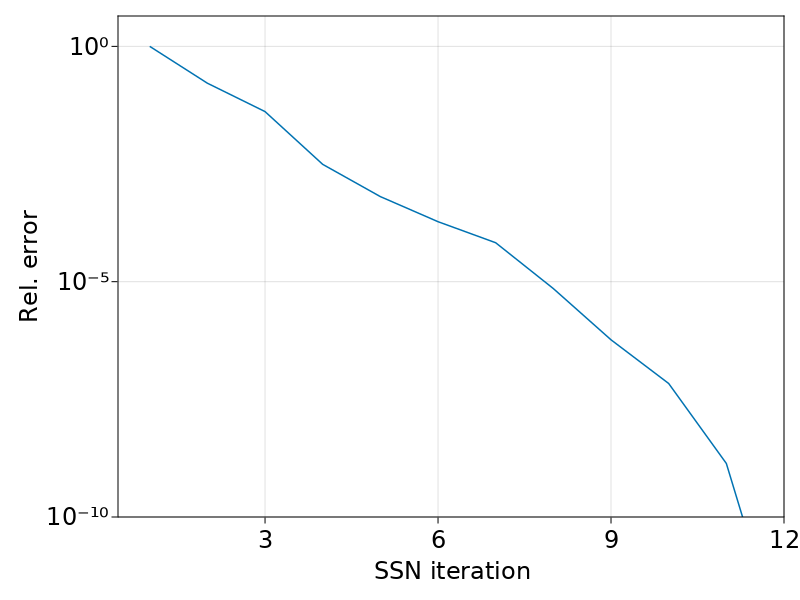}}
	\end{center}
	\caption{Driven Cavity. Left: \cblue{velocity stream lines and Inactive set (dark gray); Right: Relative residual error and total number of iterations}. Parameters: Carreau Fluid with yield $p=1.75$, tol=$\num{1e-10}$, h=1/100, k=0, $\mu =1.0$, $\gamma = \num{1e3}$, $\tau_s=2.5$}  \label{img:cavTest1_carreau}
\end{figure}}
The scheme is also tested with the \textit{Casson Law} model, qualitative results are displayed on figure \ref{img:cavTest1_casson}. Note that altough the vortex zone is wide as in \cblue{the} case of the Herschel-Bulkley model with $p=1.75$, the active zone \cblue{reaches} close to the bottom area similar to what is seen for the Herschel-Bulkley model with high $p$ values. \cblue{On the other hand, when we test the model with the Carreau law with yield, we got results close to those seen with the Herschel-Bulkley model.}

Further, on the right side of figures \ref{img:cavTest1_conv}\cblue{, \ref{img:cavTest1_casson} and \ref{img:cavTest1_carreau}}, we show the values of \cblue{the relative residual error}, as well as the number of Newton iterations. The fast decay of the
residuum in the last iterations, which illustrates the local superlinear convergence rate of the algorithm is noticeable \cblue{in all cases.}

\subsection{Flow in a bounded channel}

\begin{figure}[!t]
	\begin{center}
		\includegraphics[align = c,width=0.5\textwidth]{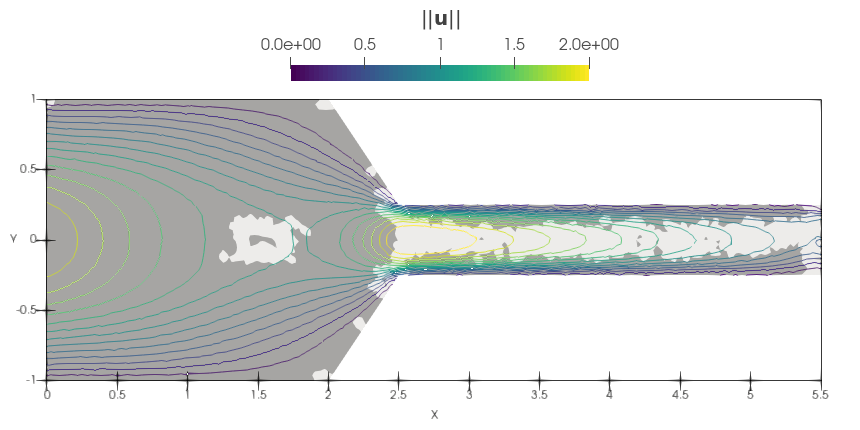}\includegraphics[align = c,width=0.5\textwidth]{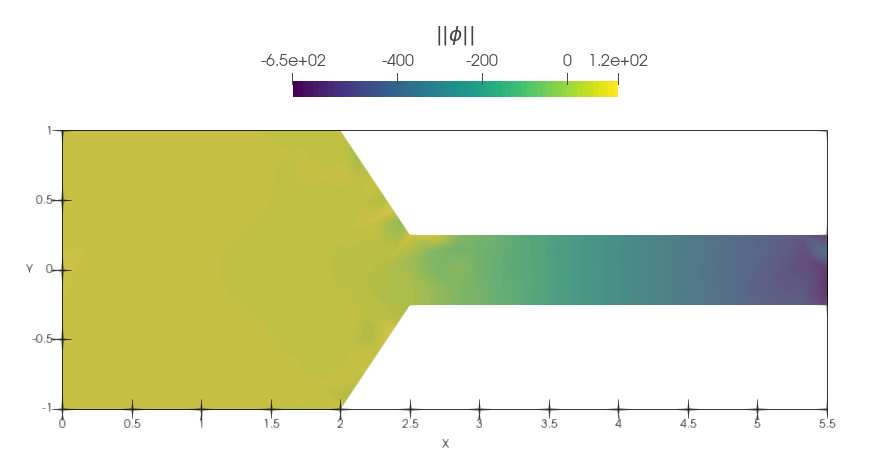}\\
		\includegraphics[align = c,width=0.5\textwidth]{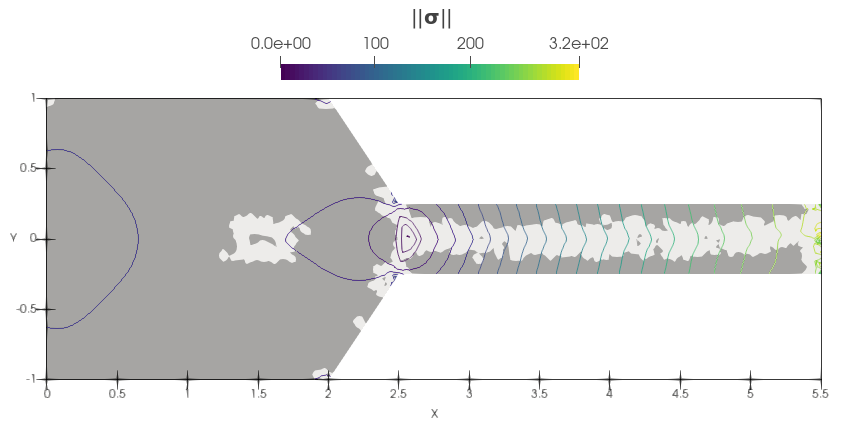}\includegraphics[align = c,width=0.5\textwidth]{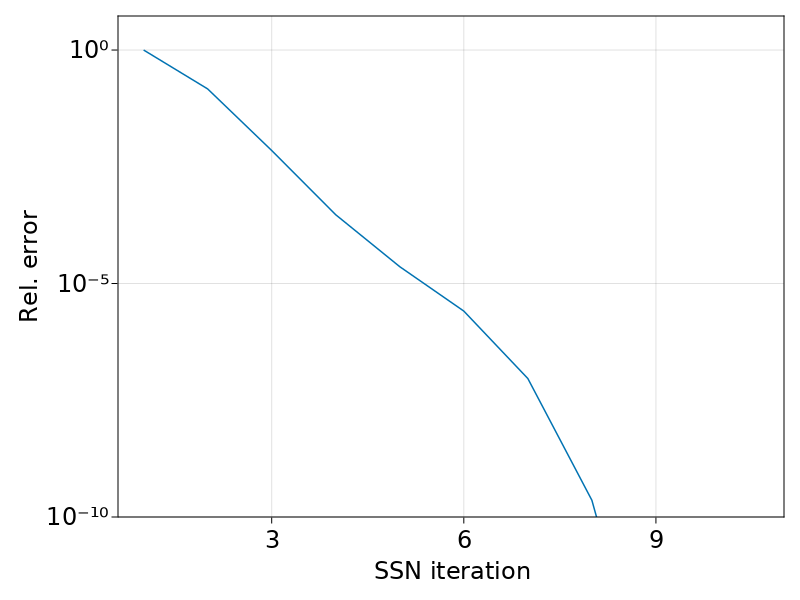}
	\end{center}
	\caption{Bounded Channel. Top Left: \cblue{velocity stream lines, Top right: pressure $\phi$, Bottom Left: $\bs{\sigma}$ tensor magnitude contours, Bottom right: Relative residual error and total number of iterations. (Inactive set in dark gray).
		Parameters: Casson Law, $k=0$, tol = $\num{1e-10}$}, $\gamma = \num{1e3}, \tau_s=2.5$.}  \label{img:boundedChannel2d}
\end{figure}

In this example we compute the flow of a fluid following the Casson Law, through a bounded channel with a reduction in the section width, \cblue{using $k=0$ and 10200 cells}. For the velocity we set a Dirichlet boundary condition on the inflow (left wall): $\mb{u}_D=(1-y^2,0)$, not penetrating boundary conditions on the walls of the channel and a stress-free boundary condition for the outflow (right wall). We also assume that there is not a forcing term, i.e., $\mb{f} = (0, 0)$ , and the material is expected to flow just under the effect of the inflow $\mb{u}_D$.

\cblue{In Figure \ref{img:boundedChannel2d} velocity vector streamlines, pressure $\phi$,  magnitude of the tensor $\bs{\sigma}$ contours, final active and inactive sets and residual errors, are shown. While pressure gradient follows the expected pattern, decreasing in the direction of flow.} Results indicate that increasing the magnitude of fluid velocity has a decreasing effect on yield stress. Accordingly, downstream of the contraction we find the active set with the highest velocity magnitude, while the inlet of low speed contains the inactive set. \cblue{In this particular example, some numerical artifacts are also distinguishable near the non-slip boundaries. It can be related with the combination of Dirichlet and the so called do nothing condition at the outflow. Indeed, a more in deep study of the interaction of our formulation with other boundary conditions is an interesting direction for future work}. 

\subsection{Reservoir flow in three dimensions}

\begin{figure}[!t]
	\begin{center}
		\includegraphics[align=c,width=0.4\textwidth]{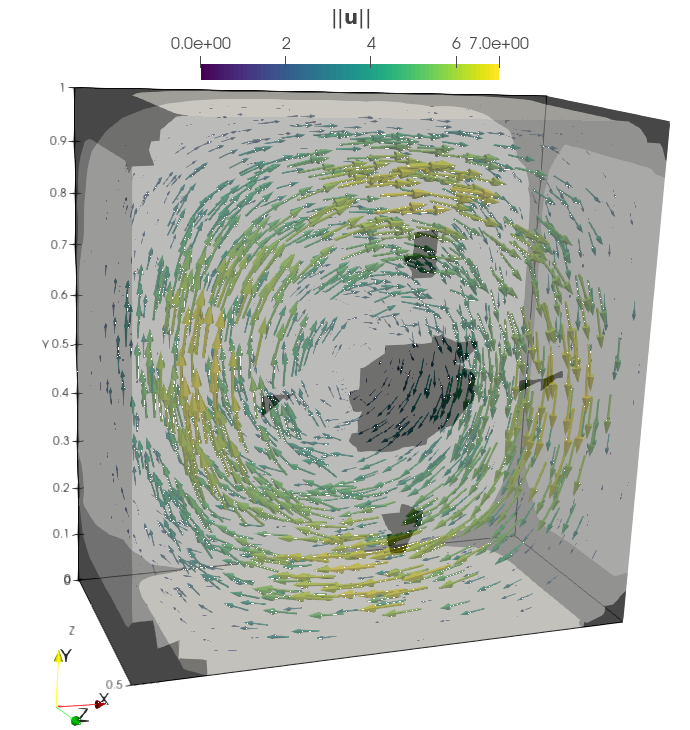}\includegraphics[align=c,width=0.4\textwidth]{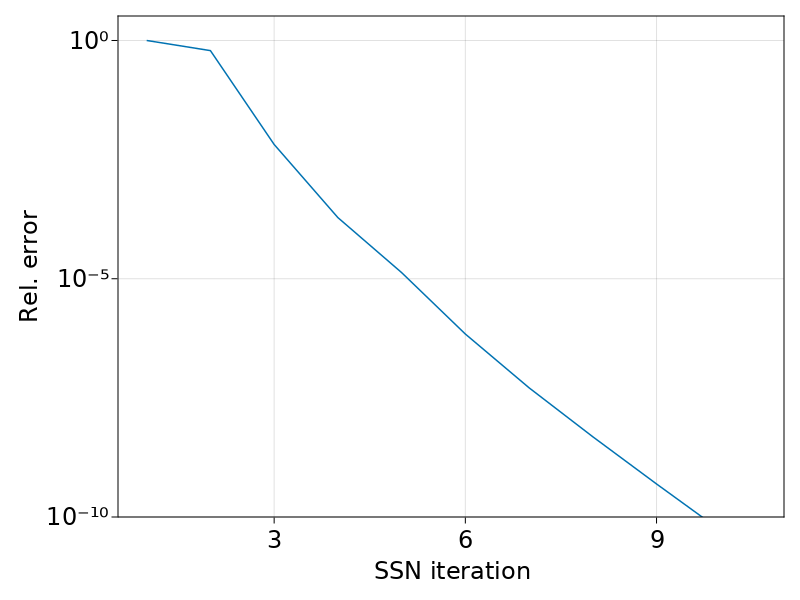}
	\end{center}
	\caption{\cblue{3D Reservoir flow. Left: velocity stream lines and Inactive set (Dark gray); Right: Relative residual error and total number of iterations.} Parameters: \cblue{Herschel-Bulkley model $p =1.75$, tol=$\num{1e-10}$, k=0}, $\gamma = \num{1e3}$, $\tau_s=5.0$.}  \label{img:resTest3}
\end{figure}

\cblue{Now, we present a simple extension of our first test to the three dimensional case.} We choose a unit cube $(0,1)\times(0,1)\times(0,1)$ as domain and the following parameters: $p=1.75$, $\tau_s = 5.0$, $\mu = 1$, \cblue{$k=0$, } and $\mb{f}(x_1,x_2,x_3) \coloneqq 300(x_2 - 0.5, 0.5 - x_1,0.0)$. The discretization of $\Omega$ is done with tetrahedrons, \cblue{using 66000 cells}. \cblue{Figure \ref{img:resTest3} shows a clipped cross section with the streamlines of fluid and particle velocities $\mb{u}$ on the left panel. The fluid velocity streamlines indicate the direction of the flow and the generation of axisymmetric recirculation patterns. Also, in the same figure the computed active and inactive sets are displayed.  As expected, the results of this three-dimensional case largely coincide with the results of the two-dimensional setting. Figure \ref{img:resTest3} right, shows convergence of the SSN iteration is slightly slower than the two dimensional counterpart.}

\subsection{Cavity Test flow in three dimensions}

\begin{figure}
	\begin{center}
		\includegraphics[align=c,width=0.5\textwidth]{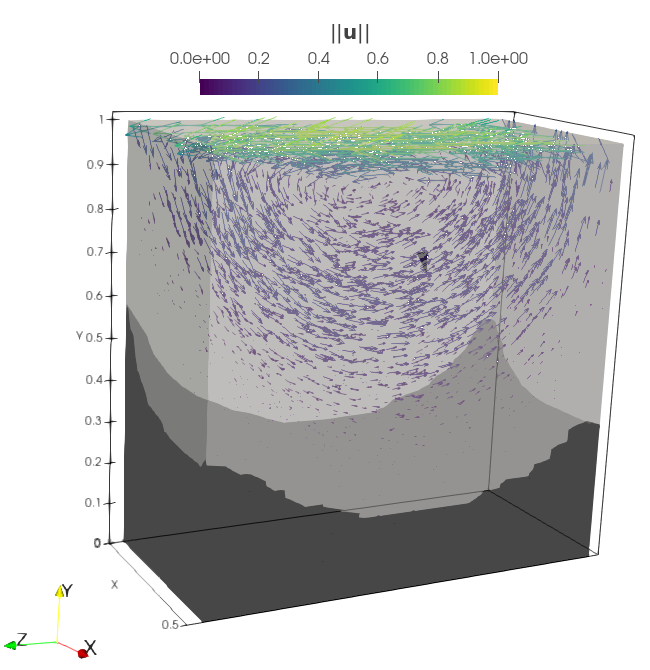}\includegraphics[align=c,width=0.5\textwidth]{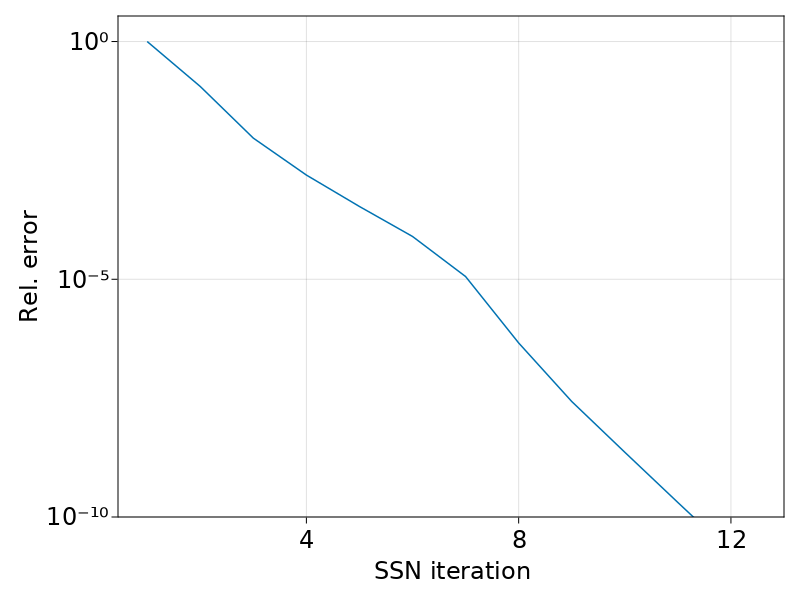}
	\end{center}
	\caption{\cblue{3D Cavity Test flow. Left: velocity stream lines and Inactive set (dark gray); Right: Relative residual error and total number of iterations.  Parameters: Herschel-Bulkley model $p=1.75$, tol = $\num{1e-10}$, k=0, $\gamma = \num{1e3}$, $\mu=1.0$, $\tau_s=2.5$.}}  \label{img:resTest4}
\end{figure}

Finally, we study a driven cavity flow in three dimensions \cblue{with the Herschel-Bulkley model}. Incompressible flows inside a 3D lid-driven cavity are of great interest because
they exhibit challenging fluid mechanical phenomena, such as instabilities, transition, turbulence, and complex 3D flow patterns. The domain is a unitary cube $\Omega = [0,1]\times[0,1]\times[0,1]$, with boundary condition:
\begin{align*}
	\mb{u}_h^D =
	\begin{cases}
		(0,0,1) &\text{if }x\in \Gamma_D := \{1\}\times[0,1]\times[0,1] \\
		(0,0,0) &		\text{otherwise}
	\end{cases}
\end{align*}
\cblue{We discretized the computational domain by a tetrahedral grid with 63900 cells and $k=0$, other relevant parameters are chosen as follows: $\gamma = \num{1e3}$, $\mu=1.0$, $\tau_s=2.5$.
Streamline plots of the computed velocity field together with the computed active and inactive sets are plotted in Figure \ref{img:resTest4} left. Residual error and total SSN iteratons are also displayed on Figure \ref{img:resTest4}. As in the previous example the convergence is slightly slower than the two dimensional case.}

\section{Discussion and concluding remarks}\label{sec:concl}
We have advanced a numerical scheme for the approximation of a general class model for viscoplastic fluids with yield, our approach incorporates a Huber type regularization and a \textit{dual-dual} mixed formulation. 

The numerical method we have used is based on Arnold-Falk-Winther finite elements and discontinuous finite elements for the flow. A semismooth Newton scheme with exact Jacobian has 
been employed for the linearization of the resulting system.

We have generated several numerical tests to verify if the proposed algorithm leads to numerical solutions that preserve important physical properties of the flow. Furthermore, we studied a wide range of models for viscoplastic materials with yield, including Herschel-Bulkley, Casson and Carreau with yield models. \cblue{The qualitative results displayed in section \ref{sec:results} are encouraging, moreover, because of the active–inactive set structure of the proposed numerical scheme, we obtain a reliable and accurate enough prediction of the final solid–fluid zones of the material in two and three-dimensional settings.}

\bigskip
\noindent\textbf{Acknowledgement.} We acknowledge the partial support by Escuela Polit\'ecnica Nacional del Ecuador, under the projects PIS 18-03 and PIGR 19-02. This research was carried out using the research computing facilities offered by Scientific Computing Laboratory of the Research Center on Mathematical Modeling: MODEMAT, Escuela Politécnica Nacional - Quito. Finally, we would like to thank the anonymous referees for many helpful comments which lead to a significant improvement of the article.



\end{document}